\documentclass[pdflatex,sn-mathphys-num]{sn-jnl}

\usepackage{soul}
\usepackage{longtable}
\usepackage{booktabs}

\usepackage{hyperref}       
\usepackage{microtype}      
\usepackage{graphicx}%
\usepackage{multirow}%
\usepackage{amsmath,amssymb,amsfonts,amsthm}%
\usepackage{mathrsfs}%
\usepackage[title]{appendix}%
\usepackage{xcolor}%
\usepackage{textcomp}%
\usepackage{manyfoot}%
\usepackage{booktabs}%
\usepackage{algorithm}%
\usepackage{algorithmicx}%
\usepackage{algpseudocode}%
\usepackage{listings}%
\usepackage{caption}
\usepackage{subcaption}
\usepackage{pgfplots}
\pgfplotsset{compat=1.18}

\usepackage[noabbrev, capitalise]{cleveref}

\newtheorem*{remark}{Remark}

\title{\bf The Geometry of Manifold Learning: 
Singular foliations via Deep Learning models}

\author[1]{\fnm{Gabriele} \sur{Novelli}}\email{378046@studenti.unimore.it}

\author[2]{\fnm{Eliot} \sur{Tron}}\email{eliot.tron@univ-paris1.fr}

\author[3]{\fnm{Emanuele} \sur{Latini}}\email{emanuele.latini@unibo.it}

\author*[4]{\fnm{Rita} \sur{Fioresi}}\email{rita.fioresi@unibo.it}

\affil[1]{\orgdiv{Dipartimento FIM}, 
\orgname{Università di Modena},
\orgaddress{\street{via Campi 213/A},
\city{Modena},
\postcode{I-41125},
\country{Italy}}}

\affil[2]{
\orgdiv{SAMM},
\orgname{Université Paris 1 Panthéon Sorbonne},
\orgaddress{\street{31 rue Baudricourt},
\city{Paris},
\postcode{75013},
\country{France}}}

\affil[3]{\orgdiv{Dipartimento di Matematica}, 
\orgname{Università di Bologna},
\orgaddress{\street{Piazza Porta San Donato 5},
\city{Bologna},
\postcode{I-40126},
\country{Italy}}}

\affil[4]{\orgdiv{FaBiT}, 
\orgname{Università di Bologna},
\orgaddress{\street{via S. Donato 15},
\city{Bologna},
\postcode{I-40126},
\country{Italy}}}

\newcommand{\comment}[1]{{\color{blue}\rule[-0.5ex]{2pt}{2.5ex}}
  \marginpar{\small\begin{flushleft}\color{blue}#1\end{flushleft}}}

\usepackage{amsmath,amsfonts,bm,amsthm,amssymb}

\newtheorem{proposition}{Proposition}[section]
\newtheorem{theorem}[proposition]{Theorem}
\newtheorem{corollary}[proposition]{Corollary}
\newtheorem{lemma}[proposition]{Lemma}
\newtheorem{thm}{Theorem}[section]
\theoremstyle{definition}
\newtheorem{definition}[thm]{Definition}
\newtheorem{example}[thm]{Example}

\newtheorem{observation}{Observation}
\usepackage{booktabs} 

\newcommand{\R}{\mathbb{R}}
\newcommand{\bE}{\mathbb{E}}
\newcommand{\cF}{\mathcal{F}}
\newcommand{\cD}{\mathcal{D}}

\newcommand{\cH}{{\mathcal{H}}}
\newcommand{\fX}{{\mathfrak{X}}}

\newcommand{\ReLU}{\mathrm{ReLU}}

\newcommand{\beq}{\begin{equation}}
\newcommand{\eeq}{\end{equation}}
\newcommand{\lra}{\longrightarrow}
\newcommand{\rank}{\mathrm{rank}}
\newcommand{\im}{\mathrm{Im}}
\newcommand{\Span}{\mathrm{span}}
\newcommand{\KL}{\mathrm{KL}}

\usepackage{tikz,pgfplots}
\pgfplotsset{compat=1.18}

\begin{document}

\maketitle

{\bf Abstract.}
Deep learning algorithms have become widely used and successful across a wide range of tasks. 
A key challenge is determining whether these algorithms truly capture the intrinsic geometry of their training data in order to better understand what they learn from data.
In this theoretical work, we explore a new geometric structure defined on the data space by a trained neural network through the Data Information Matrix (DIM), a variation of the Fisher Information Matrix.
This new geometric structure is a foliation
and the main focus of this work is to explore the non-smooth case of ReLU activation functions, where traditional geometric tools often fail.
The foliation is defined through a distribution spanned by the columns of the DIM.
We first prove that this distribution is non-smooth in the sense of distributions, rendering the existence of an associated foliation much more complex.
To circumvent this issue, we introduce a smoothed version of this distribution that differs from the original only on a set of measure zero.
Finally, we prove that this smoothed distribution induces a singular foliation on the data space, which is based on the data the network has been trained with.
Our theoretical contributions provide a geometric framework to better understand how neural networks organize their input space in non-smooth settings, bridging differential geometry and deep learning theory.


\section{Introduction}
\label{introduction}
Modern machine learning performs analysis on data laying in a high dimensional data space and
yet empirical observations suggest that the meaningful data occupy only
a small portion of the data space, a concept going under
the general loose terminology of \emph{manifold hypothesis} \citep{burges, fefferman}.
Understanding the geometry of these low-dimensional structures containing
valid data in the data space has then become
a key question \citep{Sun2014AnIG}. Existing approaches face this problem studying directly the geometry of data
\citep{burges, lee, tenenbaum}, however more recently a different perspective has emerged in
\citep{tf2024, gf}: the geometrical structure of a foliation is
induced in the data space by a distribution obtained from the parameters
of a (partially) trained neural network. Indeed, the gradients of the probability distribution
defined by a trained classifier, can be used to define {a distribution, strictly
related with the dataset the classifier was trained with}. This is not
surprising, indeed {such gradients} hold
information on how the data is distributed in the data space.  This was observed
from the very beginning of the theory in \citep{LeCun1998GradientbasedLA} and it is for example
the idea on which integrated gradients
explainability results are based, starting from the seminal
\citep{intgrad} to more recent works as \citep{intgrad2}.

The purpose of this paper is to explain how the notion of
singular foliation is fundamental to understand the space on which
a dataset is embedded in and how we can exploit it towards the manifold
hypothesis in machine learning, by providing the space of data
with a natural foliation structure, generalizing the learning manifold perspective.
This structure arises from the data-information-matrix (DIM), an analogue in data space of the
Fisher information matrix (FIM). The importance of such matrices is well
established: previous works like \citep{amari3} tie the geometry of parameter space
with the learning dynamic, by showing
that the FIM induces a highly anisotropic geometry in parameter space,
which is nearly flat in most directions and strongly distorted along a few
directions, and that its largest eigenvalue controls the steepest direction
relevant to gradient-descent dynamics. 
{These ideas were furtherly developed in many papers, see \citep{nielsen-sun17b, martens} and refs. therein}. 
As far as its data space
counterpart (the DIM) is concerned, recent works show
that the structure emerging by taking the distribution generated by its columns
is tied to the amount of noise associated with a point in the data space \citep{tf2024}. However,
the full geometric properties of this foliation remain unexplored: key questions
concerning the non-smoothness of the distribution and the mechanisms
underlying its observed loss of rank are still not fully understood. Recent
developments have further demonstrated the applicability of this geometric
perspective. In particular, it has been shown in \citep{tf2024bis} that,
by applying the framework of Cartan geometry to the emerging DIM foliation structure,
it is possible to identify optimal perturbations for crafting
adversarial attacks \citep{cpt} {beyond linearity \citep{Goodfellow2014ExplainingAH}.}
Singular Riemannian geometry has also been used to study 1-D equivalent classes in \citep{benfenatiSingularRiemannianGeometry2023, benfenatiSingularRiemannianGeometry2023a, benfenatiSingularRiemannianGeometry2024}.

The present work proposes that a framework going beyond the manifold hypothesis,
exploiting the theory of singular distributions and singular foliations to provide
the data space with a natural geometric structure.
This is especially relevant for ReLU networks, whose piecewise-linear structure naturally
leads to distributions with varying rank and limited regularity.
The ReLU activation function is indeed very popular in deep learning and the complexity of such networks have been explored for instance by \citet{haninComplexityLinearRegions2019, haninDeepReLUNetworks2019}.

This paper is organized as follows.

In Sec. \ref{ig-sec}, we recall known facts regarding information
geometry and the Fisher and Data Information Matrices (FIM, DIM). More details are
found in \citep{amari, amari2, nielsen, jost, martens}.

In Sec. \ref{distr-sec}, we briefly summarize the theory of singular
distributions and we show in Thm. \ref{main1}, App. \ref{smooth-app}
that the DIM naturally defines a 
non smooth distribution for a ReLU network convolutional classifier. For more
details on prerequisites see \citep{agrachev, hermann, lavau, mckay}.

In Sec. \ref{foliat-sec}, we briefly summarize the theory of singular
foliations, we recap the main key results:
Frobenius theorem 
and Stefan-Sussmann theorem \citep{stefan, tf2024}.
We then see a key application of such results
to ReLU network convolutional classifiers and we see
the integrability of the smooth distribution $\hat \cD$ Prop. \ref{Dint}
obtained from ReLU network
convolutional classifiers and the DIM distribution.

\section{Preliminaries on Information Geometry} \label{ig-sec}

We collect definitions and results of information geometry 
\citep{amari, nielsen} to establish our notation, as well as some 
results from \citep{gf, tf2024}.
 
We denote by $p(y|x,w)$ a discrete probability distribution on 
$C$ classification labels, i.e.
{
$p_i(y|x,w)$ denotes the predicted probability that $x$'s label is $y=i$,
where $w \in \R^n$ {are the weights in} the parameter space,
$x \in \Sigma \subset \R^d$ {the input in} the data space 
and 
$y\in \{1, \dots, C\}$ {are the labels}.
$p(y|x,w)$ can thus be though as a vector of probability $(p_i(y|x,w))_{i=1,\ldots, C}$.
}



We define following \citep{amari} Sec. 3, the
\textit{information loss} $I(x,w)$ of a probability distribution as: 
$$
I(x,w):= - \log(p(y|x,w)), 
$$
We also define the \textit{loss function} $L(x,w)$ to compare
two probability distributions $p$ and $q$:
$$
L(x,w):=\bE_{y\sim q}[I(x,w)]
$$
{where $\bE$ as usual denotes the expectation value.}
In Deep Learning algorithms
$L(x,w)$ is used to compare the model output empirical distribution $p(y|x,w)$
depending on the parameters $w$ of the model,
with the known ground truth distribution $q(y|x)$, often a probability mass
distribution.
%
For such key example, $L(x,w)$ is the Kullback-Leibler (KL) divergence
between $p(y|x,w)$ and $q(y|x)$,
up to the constant $-\sum_i q_i(y|x) \log q_i(y|x)$, which is zero for
a probability mass distribution (see the work of \citet{nielsen, amari}):
$$
\begin{array}{rl}
L(x,w) & =\bE_{y \sim q}[- \log(p(y|x,w))]= \\ \\
&= \sum_{i=1}^C q_i(y|x) \log  \frac{q_i(y|x)}{p_i(y|x,w)} -
\sum_{i=1}^C q_i(y|x) \log q_i(y|x)= \\ \\
&=\KL(q(y|x)||p(y|x,w)) -
\sum_{i=1}^C q_i(y|x) \log q_i(y|x).
\end{array}
$$

In Deep Learning (DL) classification algorithms a popular choice for $p$ is
via the softmax function:
\beq\label{prob}
p_i(y|x,w)=\mathrm{softmax}(s(x,w))_i := \frac{e^{s_i(x,w)}}
{\sum_{j=1}^{C} e^{s_j(x,w)}}, 
\eeq
where $s(x,w) \in \R^C$ is the score function determined by parameters $w$.
Popular choices for $s$ involve composition of linear and non linear maps
as  ReLUs (Rectified Linear Unit).
From such $p(y|x,w)$
we derive the \textit{cross-entropy loss function}:
$$
\begin{array}{c}
L(x,w)=\bE_{y\sim q}[I(x,w)]=
- s_{y_x}(x,w) + \log{\sum_{j=1}^C e^{s_j(x,w)}}, \label{softmax}
\end{array}
$$
where $L(x,w)$ is computed with respect to
the probability mass distribution $q(y|x)$ assigns $1$ to the correct label 
$y_x$ of $x$ and zero to the other labels. 

We define the \textit{Fisher information matrix} $F$ (FIM) and the
\textit{data information matrix} $D$ (DIM) as (see the work of \citet{amari, nielsen, gf}):
\begin{align}
  F(x,w) &= \bE_{y\sim p}[\nabla_w\log p(y|x,w)
    \cdot (\nabla_w \log p(y|x,w))^T] \label{fisher} \\[1em]
D(x,w) &= \bE_{y\sim p}[\nabla_x\log p(y|x,w) \cdot
(\nabla_x \log p(y|x,w))^T]. \label{metric-g}
\end{align}

We have the following result, giving $C$, the number
of classes, as an upper bound for
the rank of both $F(x,w)$ and $D(x,w)$  (see the work of \citet{gf}). 



\begin{proposition} \label{prop1}
Let the notation be as above.
Then:
\begin{enumerate}
\item \label{thm:positive} $F(x, w)$ and $D(x, w)$ are
  positive semidefinite symmetric matrices.

\item \label{thm:ker} $\ker F(x,w)= (\Span_{i=1, \ldots, C}\{
\nabla_w \log p_i(y|x,w)\})^\perp$;\\
$\ker D(x,w)= (\Span_{i=1, \ldots, C}\{
\nabla_x \log p_i(y|x,w)\})^\perp$.

\item \label{thm:rk} $\rank\ F(x,w) < C$, \quad $\rank\ D(x,w) < C$.
\end{enumerate}
\end{proposition}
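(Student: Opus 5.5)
We treat $F$ and $D$ in parallel, since both have the form
$$
M=\bE_{y\sim p}[v_y v_y^T]=\sum_{i=1}^C p_i\, v_i v_i^T,
$$
where $v_i=\nabla_w\log p_i(y|x,w)$ for $F$ and $v_i=\nabla_x\log p_i(y|x,w)$ for $D$, and $p_i=p_i(y|x,w)>0$ by the softmax \eqref{prob}. Everything then reduces to linear algebra on a weighted sum of rank-one matrices, plus one identity coming from normalization of $p$.

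\textbf{Symmetry and semidefiniteness.} Each $v_iv_i^T$ is symmetric, so $M$ is symmetric. For any vector $u$,
$$
u^TMu=\sum_i p_i (u\cdot v_i)^2\ge 0 .
$$
This proves item~\ref{thm:positive}.

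\textbf{Kernel.} If $u^TMu=0$, then, since all $p_i>0$, every term vanishes, so $u\cdot v_i=0$ for all $i$. Conversely, if $u\perp v_i$ for all $i$, then $Mu=\sum_i p_i v_i(v_i\cdot u)=0$. Because $M$ is positive semidefinite, $Mu=0$ holds iff $u^TMu=0$ (write $M=B^TB$). Hence
$$
\ker M=(\Span_i\{v_i\})^\perp ,
$$
which is item~\ref{thm:ker}. The strict positivity $p_i>0$ from softmax is essential here.

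\textbf{Rank bound.} By item~\ref{thm:ker}, $\rank M=\dim\Span\{v_1,\dots,v_C\}\le C$. The key step is to find a nontrivial linear relation among the $v_i$. Differentiate $\sum_i p_i=1$ with respect to $w$ (resp.\ $x$):
$$
0=\sum_i \nabla p_i=\sum_i p_i\,\nabla\log p_i=\sum_i p_i v_i .
$$
Since the coefficients $p_i$ are all nonzero, the $C$ vectors are linearly dependent, so their span has dimension at most $C-1$. This gives $\rank M\le C-1<C$, which is item~\ref{thm:rk}.

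The only delicate point is differentiability. For ReLU networks, $s$ is only piecewise smooth, so the gradients and the differentiation of $\sum_i p_i=1$ should be understood at points where $s$ is differentiable; there the identity holds. With that caveat, the identity $\sum_i p_i v_i=0$ is the main (though short) obstacle, and everything else is routine.
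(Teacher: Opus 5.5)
Your proof is correct and is the standard argument. The paper gives no proof of its own and only cites \citep{gf}. You write both matrices as $\sum_i p_i v_i v_i^T$, read off symmetry, semidefiniteness and the kernel from $u^TMu=\sum_i p_i(u\cdot v_i)^2$ with $p_i>0$, and get the rank bound from the relation $\sum_i p_i v_i=\nabla\sum_i p_i=0$.

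Your differentiability caveat can be dropped. With the paper's convention $\ReLU'(0)=1$, the gradients are defined by the chain rule as $\mathbb{J}(p)=(P-pp^T)\,\mathbb{J}(s)$. Since $\mathbf{1}^T(P-pp^T)=p^T-p^T=0$, the identity $\sum_i\nabla p_i=0$ holds at every point, kinks included, for both $\nabla_x$ and $\nabla_w$.
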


We now focus on the DIM $D$ and its
geometric significance in the data space. From
Prop. \ref{prop1} and some elementary geometric considerations we have:
\begin{equation}
\mathrm{Im}\, D(x,w)=\mathrm{span}\{ \nabla_x p_i(y|x,w)\} 
\end{equation}
Hence
\begin{equation}\label{key-obs}
\rank \,D(x,w) = \mathrm{dim} \,\mathrm{span}\{ \nabla_x p_i(y|x,w)\}
\end{equation}
This shows that the probability  distribution
$p(y|x,w)$ is not changing, if we move in the data domain
along the directions in $\ker D(x,w)$, i.e. orthogonal
to $\mathrm{Im}\, D(x,w)$, where orthogonality
is with respect to the euclidean product in $\R^d$.
We call such directions the \textit{noise directions}.
Their significance is expressed 
in Fig 1 taken from \citep{gf}, where we examine the example of a MNIST image.
\begin{figure}[h!]
\begin{center}
\includegraphics[width=\textwidth]{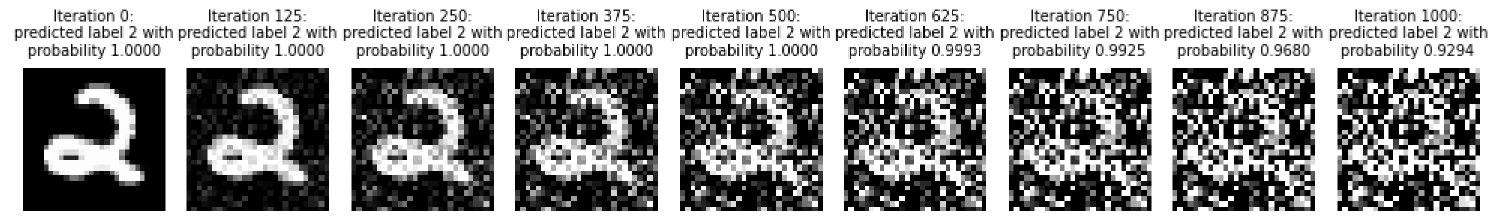}
\caption{Moving along noise directions makes images increasingly noisier,
  while the classifier continues to assign a high probably to the initial class
  \citep{gf}.}
\end{center}
\end{figure}
It is then most natural to define, starting from the data matrix $D$
the following assignments 
$\cD$ and $\cD^\perp$ in the data domain:
\beq \label{distr-def}
\begin{array}{c}
  x \mapsto \cD_x =\mathrm{Im}\, D(x,w)=
  \mathrm{span}\{\nabla_x p_i(y|x,w), \, i=1, \dots C\}
\subset \R^d  \\ \\
x \mapsto \cD_x^\perp =\ker D(x,w) = \mathrm{span}\{\nabla_x p_i(y|x,w), \, i=1, \dots C\}^\perp
\subset \R^d
\end{array}
\eeq

We define $\cD$ the \emph{DIM distribution}.
It will turn out to be important in the sequel.


\section{Singular and non-smooth Distributions} \label{distr-sec}

Let $M$ be a smooth manifold and $\fX(M)$ the
space of smooth vector fields on $M$.
We define distributions according to
\citet{stefan, sussmann}. For more details on distributions in
general see App. \ref{app-distr}.

\begin{definition}\label{distrib-def}
{\rm A \textit{distribution} 
$\Delta$ on $M$ is an assignment associating to 
$x\in M$ a vector subspace 
$\Delta _{x}\subset T_{x}M$. We say $\Delta$ is \textit{regular} or
\textit{constant rank}
if dim $\Delta_{x}=k$ for all $x \in M$. If $\Delta$ is not regular
we say it is \textit{singular}.

We say $\Delta$ is \textit{smooth}
if for each $x$ there exists a neighbourhood
$U$ of $x$ such that $\Delta_y=\Span\{X^1_y, \dots X^r_y\}$ for
all $y \in U$, $X^i\in \fX(U)$, that is, they are smooth vector fields.}
\end{definition}

A smooth distribution
can be singular, i.e. non constant rank.

\medskip
The assignment $\cD$ in (\ref{distr-def}) is a distribution.
The score function $s$ appearing
in the definition (\ref{prob}) of $p_i(y|x,w)$, for DL models  is typically a sequence
of linear maps and non linear functions applied, to each vector entry, as the ReLU:
$$
\mathrm{ReLU}(x)=\begin{cases} 0 & \hbox{if} \, x<0 \\ x & \hbox{if} \, x\geq 0 \end{cases}
$$
which is continuous but not
differentiable. Then, it is not clear whether the resulting distribution $\cD$ defined in
(\ref{distr-def}) is regular and smooth. Despite the fact $\cD$ is generated by non smooth
vector fields, the question is non trivial, as the next observation clarifies.


\begin{observation}\label{obs-sm}
A distribution holds information only \textit{at the point $x$}. So
we may have smooth vector fields spanning at every point $x$ the same 
subspace of $T_xM$ spanned by non smooth vector fields. 
For example,
consider in $\R^2$ the distribution $\Delta=\Span\{X\}$ for
$$
X=\begin{cases} 0 & \text{for $x\leq 0$} \\ \partial_x & \text{for $x > 0$}
\end{cases}
$$

Clearly this vector field is non smooth, however one can see
the same distribution $\Delta$ as equivalently generated by the smooth
vector field
$$
X'=\begin{cases} 0 & \text{for $x\leq 0$} \\ e^{-\frac{1}{x^2}}
\partial_x & \text{for $x > 0$}
\end{cases}
$$

Hence $\Delta$ is smooth. Examples of these two vector
fields can be found on Fig.~\ref{fig:vector_fields_1}.
\begin{figure}[ht]
\centering
\begin{subfigure}[b]{0.45\textwidth}
  \centering
  \begin{tikzpicture}
   \begin{axis}[%
    view     = {0}{90}, 
    axis lines=center,
    axis on top, 
    domain   = -5:5,
    y domain = -5:5,
    xtick    = {0},
    ytick    = {0},
    width    = \textwidth, 
   ]
   \addplot3[blue, scatter, only marks, domain=-5:0, samples=7, mark size=1.5pt] (x,y,0);
   \addplot3[blue, quiver={u={1}, v={0}, scale arrows=0.7}, samples=7, samples y=7, domain=0.3:5, -latex] (x,y,0);
   \end{axis}
  \end{tikzpicture}
  \caption{Vector field $X$.}
  \label{fig:field_x}
\end{subfigure}
\hfill 
\begin{subfigure}[b]{0.45\textwidth}
  \centering
  \begin{tikzpicture}
   \begin{axis}[%
    view     = {0}{90}, 
    axis lines=center,
    axis on top, 
    domain   = -5:5,
    y domain = -5:5,
    xtick    = {0},
    ytick    = {0},
    width    = \textwidth,
   ]
   \addplot3[blue, scatter, only marks, domain=-5:0, samples=7, mark size=1.5pt] (x,y,0);
   
   \addplot3[blue, 
      quiver={u={exp(-1/(x*x))}, v={0}, scale arrows=0.7}, 
      samples=8, samples y=7, domain=0.5:5, -latex] (x,y,0);
   \end{axis}
  \end{tikzpicture}
  \caption{Vector field $X'$.}
  \label{fig:field_x_prime}
\end{subfigure}

\caption{Non-smooth and smooth vector fields spanning the same distribution $\Delta$.}
\label{fig:vector_fields_1}
\end{figure}

There are however distributions that are spanned by non smooth
vector fields that are truly non smooth, i.e. we cannot find locally
smooth vector fields generating it. For example,
consider the distribution in $\R^2$:
$$
\cD=\Span\{X''\}, \qquad
X''=\begin{cases} \partial_x &
\text{for $x \leq 0$} \\ \partial_x+\partial_y & \text{for $x > 0$}
\end{cases}
$$ 
This vector field is non smooth (see Fig.~\ref{fig:non_smooth_distribution_example})
and one can show that no
smooth vector field can generate the same distribution.

\begin{figure}[ht]
\centering
\begin{tikzpicture}
 \begin{axis}[%
  view     = {0}{90}, 
  axis lines=center,
  axis on top, 
  domain   = -5:5,
  y domain = -5:5,
  xtick    = {0},
  ytick    = {0},
  width    = 0.55\textwidth, 
]
\addplot3[blue, 
    domain=-5:-0.1,
    quiver={u={1}, v={0}, scale arrows=0.7},
    samples=5, samples y=7, -latex] (x,y,0);
\addplot3[blue, 
    domain=0.1:5,
    quiver={u={1}, v={1}, scale arrows=0.7},
    samples=5, samples y=7, -latex] (x,y,0);
\end{axis}
\end{tikzpicture}
    \caption{Vector field $X''$ spanning the non-smooth distribution $\cD$.}
    \label{fig:non_smooth_distribution_example}
\end{figure}

\end{observation}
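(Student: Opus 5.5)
The statement to prove is the last claim of Observation \ref{obs-sm}: no smooth vector field (indeed no finite family of smooth vector fields) can generate, near the origin, the distribution $\cD=\Span\{X''\}$.

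I will argue by contradiction, working near a point of the line $\{x=0\}$, say the origin. First I record the data. We have $\cD_{(x,y)}=\Span\{\partial_x\}$ for $x\le 0$ and $\cD_{(x,y)}=\Span\{\partial_x+\partial_y\}$ for $x>0$. In particular, $\cD$ has constant rank $1$ everywhere.

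Suppose there are a neighbourhood $U$ of the origin and smooth vector fields $Y^1,\dots,Y^r\in\fX(U)$ with $\cD_q=\Span\{Y^1_q,\dots,Y^r_q\}$ for all $q\in U$, as in Definition \ref{distrib-def}.

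The first step reduces to a single generator. Since $\dim\cD_0=1$, some $Y^j$ satisfies $Y^j_0\neq 0$. By continuity, $Y:=Y^j$ is nonvanishing on a smaller neighbourhood $V\subset U$ of the origin. At each $q\in V$ the vector $Y_q$ lies in the line $\cD_q$ and is nonzero, so $Y_q$ spans $\cD_q$.

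The second step uses the slope. Write $Y=a\,\partial_x+b\,\partial_y$ with $a,b$ smooth on $V$. For $q\in V$ with $x\le 0$ we need $Y_q\in\Span\{\partial_x\}$, so $b(q)=0$ and $a(q)\neq 0$. For $x>0$ we need $Y_q\in\Span\{\partial_x+\partial_y\}$, so $a(q)=b(q)\neq 0$.

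Now take $q_n=(1/n,0)\to 0$. Continuity of $a$ and $b$ gives $a(0)=\lim a(q_n)=\lim b(q_n)=b(0)$. But $b(0)=0$, because the origin has $x\le 0$, while $a(0)\neq0$. This is a contradiction.

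An equivalent and more invariant way to phrase the obstruction is this. Any smooth distribution of constant rank $k$ is a continuous map into the Grassmannian. Here the line field jumps from $[1:0]$ to $[1:1]$ across $x=0$, so it is discontinuous.

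The only delicate point is the first step. Smoothness of the distribution allows several generators, some of which may vanish at some points. So one must pick a generator that is nonzero at the base point and use continuity to keep it nonzero nearby. Once that is done, everything else is an immediate continuity argument; only continuity of the generators is needed, not smoothness.
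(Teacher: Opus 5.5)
Your argument is correct and is essentially the paper's. The observation itself only asserts ``one can show'', but its general version is Lemma~\ref{key-lem}, proved by exactly this continuity argument: the set where a smooth section $Y$ of $\cD$ lies in the closed subspace $\Span\{\partial_x+\partial_y\}$ is closed and contains $\{x>0\}$, so it contains the origin, contradicting $0\neq Y_0\in\Span\{\partial_x\}$; your $a(0)=b(0)$ is the coordinate form of this.

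Two minor points. First, the nonvanishing of $Y$ near the origin is not needed: $a=b$ on $\{x>0\}$ holds even where $Y$ vanishes, and only $Y_0\neq 0$ is used. Second, you skip the routine first part of the observation, namely that $x\mapsto e^{-1/x^2}$ extended by $0$ is $C^\infty$ and positive for $x>0$, so $X'$ is a smooth generator of $\Delta$.
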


We now focus on the DIM distribution $\cD$, as in (Eq. \ref{distr-def}).
From now on, we also assume the
score function to be given as:
\beq\label{score-def}
s(x,w) = L_{W_\ell}(w) \circ \ReLU_{\ell} \circ \dots \circ \ReLU_1 \circ L_{W_1}(w)(x), \qquad
x\in \R^d, \quad w \in \R^n
\eeq
where $w = (W_1,\ldots,W_\ell)$ are the weight matrices, $L_{W_i}(w)$ are linear maps {corresponding to the matrix multiplication 
of the output of the $i$-th layer by $W_i$} and $\ReLU_{k}$ is the ReLU function applied
entry-wise to $L_k(w) \circ \ReLU_{k-1} \dots \circ \ReLU_1 \circ L_{W_1}(w)(x)$.

{
Consider the DIM distribution in $\R^d$:  
$$
x \mapsto \cD_x = \mathrm{span} \{\nabla_x p_i(y|x, w) \, |\, i = 1, \dots, C\}
$$
We take the convention, as customary for some neural networks implementations,
to assign to the non differentiable point of each ReLU the value 1, so that, with 
an abuse of notation, we can compute $\nabla_x p_i(y|x, w)$ also in such points.
}

\medskip

The next proposition is proven in \citep{tf2024}.

\begin{proposition}\label{tf-res}
Let the notation be as above. Consider the DIM distribution in $\R^d$:  
$$
x \mapsto \cD_x = \mathrm{span} \{\nabla_x p_i(y|x, w) \, |\, i = 1, \dots, C\}
$$
and assume $p$ to be given via softmax as in Eq.~\ref{prob} and score from Eq.~\ref{score-def}.
Then, its singular points 
are a closed null subset of $\R^d$
contained in a finite union hyperplanes $\cH$.
\end{proposition}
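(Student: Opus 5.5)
The plan is to exploit the piecewise-linear structure of the score function in Eq.~\ref{score-def}. First I will fix the activation pattern. For every $x\in\R^d$, each ReLU unit in each layer is either active, with pre-activation $\geq 0$ (recall that the convention assigns derivative $1$ at $0$), or inactive. Let $\sigma(x)$ denote this finite vector of patterns. On the set $U_\sigma=\{x:\sigma(x)=\sigma\}$ the score coincides with an affine map $s(x,w)=A_\sigma x+b_\sigma$. The key observation is the following. The pre-activation of a given unit in layer $k$ is itself affine on each region where the patterns of layers $<k$ are fixed. Hence its zero set inside such a region lies in a hyperplane. There are only finitely many units and finitely many patterns, so every boundary between regions is contained in a finite union $\cH$ of affine hyperplanes. (If some affine function is identically zero, I either discard it or note that it produces no switching of the pattern.) The complement $\R^d\setminus\cH$ is open, and on each of its connected components the pattern $\sigma$ is constant.

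Next I will show that every $x\notin\cH$ is a regular point. On a component $V$ of $\R^d\setminus\cH$ we have $s=A_\sigma x+b_\sigma$, so $p$ is a smooth (real-analytic) function of $x$. By the chain rule, $\nabla_x p_i = A_\sigma^T\, J_{\mathrm{softmax}}(s(x))_i$. The softmax Jacobian $J=\mathrm{diag}(p)-pp^T$ has rank exactly $C-1$ at every point, because all $p_i>0$ and its kernel is spanned by $(1,\dots,1)$. Therefore $\cD_x=A_\sigma^T\,\mathrm{Im}\,J(s(x))=A_\sigma^T(\mathbf 1^\perp)$, which is a fixed subspace independent of $x\in V$. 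So $\cD$ is constant on $V$: it is locally of constant rank and smooth, spanned by constant vector fields. Consequently no point of $\R^d\setminus\cH$ is singular, whichever notion of singularity is used (a rank jump, or a failure of local smoothness or regularity).

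Then the set $S$ of singular points satisfies $S\subset\cH$, and $\cH$ is a finite union of hyperplanes and therefore Lebesgue-null. Closedness follows from the definition: a regular point is one having a neighbourhood on which $\cD$ is regular (constant rank and smooth). The regular set is thus open by definition, and $S$, being its complement, is closed. A closed subset of a null set is null, which gives the claim.

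The main obstacle is the first step, namely proving rigorously that the region boundaries lie in a finite union of hyperplanes despite the recursive, layer-dependent definition. I will handle this by induction on the layer $k$. Assume $\R^d$ is partitioned, up to a finite union of hyperplanes $\cH_{k-1}$, into regions on which layers $1,\dots,k-1$ have fixed patterns. The $k$-th pre-activations are affine on each of these finitely many pattern-regions. For each pair (pattern of the earlier layers, unit) I adjoin the hyperplane defined by the corresponding affine function, extended to all of $\R^d$, unless that function is constant. This yields $\cH_k$, and finally $\cH=\cH_\ell$.
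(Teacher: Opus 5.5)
Your proof is correct and is essentially the argument the paper relies on. The paper defers the proof to \citep{tf2024}, but uses the same ingredients in Obs.~\ref{score-distr} and App.~\ref{smooth-app}: off a finite union $\cH$ of hyperplanes the score is affine with constant Jacobian, and the softmax factor $P-pp^T$ has the fixed image $\mathbf{1}^\perp$, so $\cD$ is locally constant there. Your reading of ``singular'' as ``not locally of constant rank (or not locally smooth)'' is the right one, since it makes closedness automatic, whereas a pointwise ``rank below the maximum'' notion would fail because the rank can drop on whole open cells.
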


In \citep{tf2024} the authors also show that the possible non smooth points of
the distribution are contained in the set of singular points, 
though in the light of Obs. \ref{obs-sm} the fact $\cD$ is generated by
non-smooth vector fields does not imply it is non smooth.

We now show that indeed the DIM distribution $\cD$ is non smooth.

\begin{observation}\label{score-distr}
The distribution $\mathcal{D}$ is spanned by the gradients of the
empirical probabilities $p_i(y|x,w)$ given via softmax (see Eq.~\ref{prob}):
$$
{\frac{\partial p_i(y|x,w)}{\partial x^j}}=\sum_k p_i(y|x,w)(\delta_{ik}-p_k(y|x,w))
{\frac{\partial s_k(x,w)}{\partial x^j}}
$$
where $s(x,w)=(s_1(x,w),\dots,s_C(x,w))$ is the score function,
and $0<p_i(y|x,w)<1$ because of the softmax layer.
{
We can write more compactly:
$$
\mathbb{J}(p)=A\mathbb{J}(s)\hbox{ where }A=P-pp^T\hbox{ and }P=\begin{pmatrix}
        p_1&0...&0\\
        0&p_2&...&0\\
        ...&...&...&...\\
        0&0&...&p_C
    \end{pmatrix}
$$
Consider then the matrix $B=\mathbb{I}-{1\over C}\mathbf{1}\mathbf{1}^T$ with $\mathbf{1}$ the vector with all components equals to $1$.
Then $\hbox{Ker}(B)=\mathbf{1}=\hbox{Ker}(A)$ and since $B$ is symmetric, by the Spectral
theorem we have $\hbox{Im}(B)=\hbox{Im}(A)$. Hence we have:
$$
\hbox{Im}(\mathbb{J}(p)^T)=\hbox{Im}(\mathbb{J}(s)^TA)=
\hbox{Im}(\mathbb{J}(s)^TB)
$$
But since $B$ is a constant matrix, as far as the smoothness of $\mathcal D$ is
concerned we can replace $\cD$ with $\cD_s$:
$$
\mathcal{D}_s=
\mathrm{span}\{\nabla_x s_i(x,w)\,|\, 
i=1,...,C\}
$$
}
\end{observation}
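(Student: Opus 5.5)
The plan is to reduce everything to linear algebra at a fixed point $x$ and to use constancy of $B$ to transfer smoothness questions. First I will verify the Jacobian formula. Differentiating $p_i=e^{s_i}/\sum_j e^{s_j}$ with the chain rule gives $\partial p_i/\partial s_k = p_i(\delta_{ik}-p_k)$. Composing with $\partial s_k/\partial x^j$ then yields $\mathbb{J}(p)=A\,\mathbb{J}(s)$ with $A=P-pp^T$. This holds wherever $s$ is differentiable, and also at the ReLU kinks under the stated convention, since the convention is used consistently in the chain rule. Positivity $0<p_i<1$ is immediate from the exponentials, since $C\ge 2$.

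Second, I will identify $\ker A$. Clearly $A\mathbf{1}=p-p(p^T\mathbf{1})=0$ because $\sum_k p_k=1$. For the converse, compute for $v\in\R^C$
\[
v^TAv=\sum_k p_k v_k^2-\Bigl(\sum_k p_k v_k\Bigr)^2 ,
\]
which is the variance of $v$ under the probability vector $p$. Because every $p_k>0$, this variance vanishes only when $v$ is constant. Since $A$ is symmetric positive semidefinite, $v^TAv=0$ is equivalent to $Av=0$, so $\ker A=\Span\{\mathbf{1}\}$. The same holds for $B$ trivially, since $B$ is the orthogonal projection onto $\mathbf{1}^\perp$.

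Third, since both $A$ and $B$ are symmetric, I get $\im A=(\ker A)^\perp=\mathbf{1}^\perp=\im B$. Hence
\[
\cD_x=\im\bigl(\mathbb{J}(p)^T\bigr)=\mathbb{J}(s)^T\,\im A=\mathbb{J}(s)^T\,\mathbf{1}^\perp=\im\bigl(\mathbb{J}(s)^TB\bigr).
\]
Here I used $\mathbb{J}(p)^T=\mathbb{J}(s)^TA^T=\mathbb{J}(s)^TA$. Explicitly, $\cD_x=\Span\{\nabla_x(s_i-\bar s)\}=\Span\{\nabla_x(s_i-s_C)\,|\,i<C\}$, where $\bar s=\frac1C\sum_k s_k$. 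The point is that $\cD$ is generated by the columns of $\mathbb{J}(s)^TB$. These are fixed constant linear combinations of the score gradients, with no $x$-dependent smooth factors like $p$ involved. Consequently any smoothness or non-smoothness argument carried out for the span of score gradients transfers verbatim.

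The main obstacle, and the step I will treat carefully, is the last sentence of the Observation. Literally, $\cD$ is the subspace $\mathbb{J}(s)^T(\mathbf{1}^\perp)\subseteq\cD_{s,x}$, which may have codimension one in $\cD_{s,x}$. So "replacing $\cD$ with $\cD_s$" must be understood as replacing the generating family $\{\nabla_x p_i\}$ by the family $\{\nabla_x(s_i-\bar s)\}$. That family has exactly the same structure as $\{\nabla_x s_i\}$: it consists of gradients of functions of the form (\ref{score-def}), namely the score of a network whose last linear layer is $BW_\ell$. I will therefore phrase the reduction as follows: $\cD$ is the $\cD_s$-type distribution of the network with modified last layer $BW_\ell$. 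Any statement proved for $\cD_s$ for arbitrary weights then applies to $\cD$.
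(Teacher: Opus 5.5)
Your linear algebra is the paper's: $\mathbb{J}(p)=A\,\mathbb{J}(s)$, $\ker A=\ker B=\Span\{\mathbf{1}\}$, symmetry gives $\im A=\im B$, hence $\cD_x=\im(\mathbb{J}(s)^TB)$. Your variance identity actually proves $\ker A=\Span\{\mathbf{1}\}$, which the paper only asserts.

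The real difference is the last step. The paper passes from $\im(\mathbb{J}(s)^TB)$ to $\cD_s=\im(\mathbb{J}(s)^T)$ only because $B$ is constant. These spaces differ in general: $\cD_{s,x}=\cD_x+\Span\{\mathbb{J}(s)^T\mathbf{1}\}$, and by Prop.~\ref{prop1}(3) $\cD$ has rank $<C$ whereas $\cD_s$ can reach rank $C$. Even local smoothness of the two is not equivalent for fixed weights. Take $d=3$, $C=2$, $W_1=\mathbb{I}$ and $W_2=\begin{pmatrix}1&0&1\\0&1&1\end{pmatrix}$. Near $x=(1,1,0)$, $\cD=\Span\{e_1-e_2\}$ is constant. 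But $\cD_s$ equals $\Span\{e_1+e_3,e_2+e_3\}$ for $x_3\ge 0$ and $\Span\{e_1,e_2\}$ for $x_3<0$, so it is not smooth by the continuity argument of Lemma~\ref{key-lem}.

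Your identification of $\cD$ with the score-gradient distribution of the network whose last layer is $BW_\ell$ is exact. It is what the weight-independent parts of the appendix (Lemma~\ref{key-lem}, Corollary~\ref{cor_1}) need. The paper's formulation is shorter and lets the appendix keep the original weights, but as written it is not justified; yours is the correct version.

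One caution about your last sentence: it transfers only results valid for \emph{all} weights, whereas Theorem~\ref{main1} is a genericity statement. Since $\mathbf{1}^TB=0$, the map $W_\ell\mapsto BW_\ell$ takes values in the proper, hence Lebesgue-null, subspace $\{W_\ell:\mathbf{1}^TW_\ell=0\}$. A closed null exceptional set for $\cD_s$ could a priori contain all of that subspace. To push Theorem~\ref{main1} through your reduction, you must also check that the exceptional set meets this subspace in a null set. For example, check that the minors used in Theorem~\ref{not_smoot}, formed with $B\,\mathbb{J}(s)$ instead of $\mathbb{J}(s)$, do not vanish identically.
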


\begin{theorem}\label{main1}
Let the notation be as above. Then for a generic choice of
weights in the parameter space $\R^n$, the distribution $\cD$ is non smooth.
\end{theorem}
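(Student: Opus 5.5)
By Observation \ref{score-distr} it suffices to show that $\cD_s=\Span\{\nabla_x s_i(x,w)\}$ is non smooth. Indeed $\cD_x=\mathrm{Im}(\mathbb{J}(s)^T B)$, and $B$ is constant, so a local smooth generating family for one distribution yields one for the other. The strategy is to find, for generic $w$, a point $x_0$ on one of the hyperplanes of $\cH$ from Prop.~\ref{tf-res} where $\cD_s$ jumps, on the two sides of the hyperplane, between two subspaces of the \emph{same} dimension that are different. This mimics the vector field $X''$ of Observation \ref{obs-sm}. Such a jump cannot occur for a smooth distribution, by the following lemma, which I will prove first.

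\emph{Lemma.} Let $\Delta$ be a smooth distribution and suppose $\dim\Delta_x=k$ on a neighbourhood $U$ of $x_0$. Then $x\mapsto\Delta_x$ is continuous on $U$ as a map to the Grassmannian $\mathrm{Gr}(k,d)$. To prove it, take smooth $X^1,\dots,X^r$ spanning $\Delta$ near $x_0$. Choose $k$ of them that are independent at $x_0$; they remain independent nearby. Since the dimension is constantly $k$, they span $\Delta_x$ for $x$ near $x_0$, and their span depends continuously on $x$. Hence, if $\Delta$ has constant rank near $x_0$ but its one-sided limits at $x_0$ are different $k$-planes, then $\Delta$ is not smooth at $x_0$.

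Next I construct the jump. On each open linear region $R$ of the ReLU network, $s(x,w)=W_\ell\Lambda_{\ell-1}W_{\ell-1}\cdots\Lambda_1W_1x$, where the $\Lambda_j$ are constant $0/1$ diagonal activation patterns. Therefore $\cD_s|_R=\mathrm{Im}(W_1^T\Lambda_1W_2^T\cdots\Lambda_{\ell-1}W_\ell^T)$ is constant on $R$. I will pick two adjacent regions $R_+,R_-$ separated by a hyperplane piece $H$ on which exactly one neuron changes state, so that $\Lambda$ changes in one diagonal entry. I will also pick a point $x_0\in H$ where no other neuron vanishes; the convention ReLU$'(0)=1$ then gives $\cD_{x_0}=\cD_{R_+}$ (say). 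For generic weights I want $\dim\cD_{R_+}=\dim\cD_{R_-}$ but $\cD_{R_+}\ne\cD_{R_-}$. Then $\cD_s$ has constant rank near $x_0$ and is discontinuous there, so the Lemma applies.

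Rank equality holds, for example, when the width of every layer is at least $C$ and both patterns keep at least $C-1$ (or $C$) active neurons per layer. For generic matrices the product then has the maximal rank allowed, namely $\min(C,\#\text{active})$. Distinctness of the images is the crux. Removing one active neuron in the first hidden layer, say neuron $j$, replaces $\mathrm{Im}(W_1^T\Lambda_1M)$ by $\mathrm{Im}(W_1^T\Lambda_1'M)$, where $M$ denotes the remaining product. For generic $W_1$ the row $(W_1)_j$ is not contained in the image for the other pattern. Each condition in this construction (full rank, non-containment, existence of a point $x_0\in H$ not on other hyperplanes) is the non-vanishing of a polynomial in $w$, or the complement of a lower-dimensional set. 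So it suffices to exhibit one explicit weight choice (e.g.\ $d=2$, $C=2$, one hidden layer, mirroring $X''$) and then argue by Zariski-openness.

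The main obstacle is this genericity step. I have to ensure simultaneously that the switching hyperplane really meets the relevant regions, with both sides nonempty, for an open dense set of weights, and that the rank does not drop when crossing. A single explicit example shows that the set of good weights is nonempty. However, the existence of $x_0$ is a semialgebraic and not a purely algebraic condition. I would handle this by choosing $x_0$ on a first-layer hyperplane $\{(W_1x)_j=0\}$, which always exists when $(W_1)_j\neq0$. For generic weights the other neurons are all nonzero at a generic point of that hyperplane, since this holds on a dense open subset of the hyperplane. The only remaining requirement is that enough neurons are active at $x_0$, and I would check it in the explicit example and then propagate it by an open-condition argument.
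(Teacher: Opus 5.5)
Your Grassmannian lemma is correct, but the argument breaks down exactly where you locate the ``main obstacle''.

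\textbf{The gap.} That lemma only detects jumps between distinct planes of the \emph{same} dimension. So you must show that, for a full-measure set of weights, some pair of adjacent regions has equal rank and different images.

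\begin{enumerate}
\item \emph{Openness is not genericity.} Which regions exist, and how many neurons are active in each layer there, is governed by sign conditions on $w$. Checking this at one explicit weight and ``propagating by openness'' yields only a nonempty open set of weights, not a generic one.
\item \emph{The condition really fails on open sets.} Take two hidden layers with $W_2$ entrywise negative. Since $\ReLU(W_1x)\ge 0$, every second-layer pre-activation is negative wherever $\ReLU(W_1x)\neq 0$. So $\cD$ vanishes on an open dense set and no equal-rank jump exists. Yet $\cD$ is still non smooth, e.g.\ at $x=0$: the convention $\ReLU'(0)=1$ gives $\mathbb{J}_0(s)=W_3W_2W_1$, hence $\cD_0\neq 0$.
\item \emph{Distinctness can fail.} Your claim that $(W_1)_j$ avoids the other image fails whenever that image is all of $\R^d$. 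This is the generic situation in max-rank regions when $d\le C-1$.
\item \emph{The explicit example does not work as set up.} For $\cD_s$ with $d=C=2$ and one hidden layer, a single neuron flip changes the rank unless both sides equal $\R^2$. It mirrors $X''$ only for $\cD$ itself, which is spanned by $\nabla_x(s_1-s_2)$.
\item \emph{The reduction is misstated.} $\cD_x=\mathbb{J}_x(s)^T(\mathbf{1}^\perp)$ is in general a proper subspace of $(\cD_s)_x$, so generators of one are not generators of the other. The correct reduction is $\cD=\cD_{s'}$ with $s'_i=s_i-s_C$, a ReLU network with $C-1$ outputs and a generic last layer. (Obs.~\ref{score-distr} is equally terse on this point.)
\end{enumerate}

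\textbf{How the paper avoids this.} First, Lemma~\ref{key-lem} uses a weaker consequence of smoothness than continuity. A continuous section of $\cD$ lying in the constant space $\im(\mathbb{J}(s)_+^T)$ on $U\cap R_+$ stays in it on $U\cap H$. Since $\cD|_H$ equals the space of the active side, smoothness forces $\im(\mathbb{J}_x(s)_-^T)\subseteq\im(\mathbb{J}_x(s)_+^T)$ on $H$. This also excludes rank-dropping crossings, which your lemma cannot see.

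Second, genericity is obtained contrapositively. If $\cD$ is smooth and non constant (Obs.~\ref{obs_7}), some crossing has a strict inclusion, so $\rank\mathbb{J}_x(s)_-<\rank\mathbb{J}_x(s)_+$. But $\mathbb{J}(s)_+$ is a specialization of $\mathbb{J}(s)_-$, obtained by zeroing a row of $W_{k+1}$ (Cor.~\ref{cor_1}). Hence the relevant minors of $\mathbb{J}(s)_-$ are not identically zero in $w$. Taking all finitely many pairs of adjacent activation patterns, $w$ is confined to a proper algebraic set, and one never has to exhibit particular regions.

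\textbf{A hypothesis is needed.} Some assumption like non-constancy is unavoidable. Take $d=2$, $C=3$ and one hidden layer of six neurons whose first-layer rows lie near the vertices of a regular hexagon. Every activation pattern then contains two independent active rows, so $\cD\equiv\R^2$ for generic $W_2$. This gives an open set of weights on which $\cD$ is smooth. So no hypothesis-free direct construction like yours can cover all architectures.
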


\begin{proof} See the App. \ref{smooth-app}. \end{proof}

{
\begin{remark}
{\rm This result states that once we fix the weights $w \in \R^n$
generically (in a closed subset of $\R^n$, see App. \ref{smooth-app}),
then the distribution $\cD$ defined on the {\sl data space}
$\R^d$ is non smooth. This result is not surprising and yet it is elusive, because
it is true not for every choice of the weights, but for a generic one.

Moreover, the example in Obs. \ref{obs-sm} shows that a distribution generated by
non smooth vector fields can be indeed smooth, thus expressing the subtlelty of
the non smoothness statement.}
\end{remark}
}

Though the DIM distribution $\cD$ is in general non smooth, we can
slightly modify it, by changing its values on a closed set in the data space, so to
make it smooth. From a machine learning perspective it is an immaterial change,
since it occurs in a measure zero set in the data space, however it is of key importance
for manifold learning, since it will bring along {\sl integrability} as we shall see
in the next section.


\begin{definition}\label{Dhat-def}
{\rm Let $\mathcal{D}$ be the DIM distribution and let $\cH$
the union of the hyperplanes containing the singular points of $\cD$. We define the
distribution $\widehat{\mathcal{D}}$ in the data space $\R^d$ as follows:
$$
\widehat{\mathcal{D}}_x=\begin{cases}
0 &  \hbox{for }
x \in\cH\\
\mathcal{D}_x=\mathrm{span}\{\nabla_x p_i(y|x,w), \, i=1, \dots C\} & \hbox{otherwise}
\end{cases}
$$}
\end{definition}


\begin{proposition}
Let the notation be as above. Then the distribution $\widehat{\cD}$ is smooth.
\end{proposition}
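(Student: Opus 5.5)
Our plan is to check smoothness locally, treating points off $\cH$ and points on $\cH$ separately. The key fact is that $\cH$ is a finite union of hyperplanes, so it is closed and its complement $\R^d\setminus\cH$ is open. Moreover, we may take $\cH$ to contain all the hyperplanes where some pre-activation of some ReLU vanishes (cf. Prop. \ref{tf-res} and the construction in \citep{tf2024}). On each connected component $U$ of $\R^d\setminus\cH$ the activation pattern of every ReLU is then constant. Hence the score $s(\cdot,w)$ restricts on $U$ to an affine map $x\mapsto Mx+b$.

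\textbf{Step 1 (points off $\cH$).} On a component $U$ the score is affine, so each $s_k$ is smooth on $U$. Consequently $p_i=\mathrm{softmax}(s)_i$ is smooth on $U$, and so is each $\nabla_x p_i(y|x,w)$. We take $X^i=\nabla_x p_i$, for $i=1,\dots,C$, as the local generators on the open neighbourhood $U$ of any $x\notin\cH$. Definition \ref{Dhat-def} gives $\widehat\cD_y=\cD_y=\Span\{X^i_y\}$ for all $y\in U$, so smoothness holds there.

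\textbf{Step 2 (points on $\cH$).} This is the main obstacle, since we need smooth vector fields that span $\cD$ off $\cH$ but vanish identically on $\cH$ near a given point. The plan follows Obs. \ref{obs-sm}: multiply by a flat function. Write $\cH=\bigcup_{j=1}^m\{\ell_j(x)=0\}$ with affine $\ell_j$, and set $\phi(x)=\prod_{j=1}^m\psi(\ell_j(x))$, where $\psi(t)=e^{-1/t^2}$ for $t\neq0$ and $\psi(0)=0$. Then $\phi$ is smooth on $\R^d$, vanishes exactly on $\cH$, and all its derivatives vanish on $\cH$. Define $Y^i=\phi\,\nabla_x p_i$ on $\R^d\setminus\cH$ and $Y^i=0$ on $\cH$. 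At every point, $\Span\{Y^i_y\}=\widehat\cD_y$, because $\phi\neq0$ off $\cH$.

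\textbf{Step 3 (smoothness of $Y^i$ across $\cH$).} We must show each $Y^i$ is $C^\infty$ at the points of $\cH$. Off $\cH$ the field $\nabla_x p_i=A(x)\,\mathbb J(s)^T$-type expression (Obs. \ref{score-distr}) is built from finitely many constant Jacobians $\mathbb J(s)=M_U$, one per region. These are multiplied by softmax-derived quantities bounded together with all their derivatives, since on each region softmax of an affine map has derivatives bounded by polynomials in $\|M_U\|$ and there are finitely many regions. So every derivative of $\nabla_x p_i$ on $\R^d\setminus\cH$ is globally bounded. By Leibniz, every derivative of $Y^i$ off $\cH$ is a finite sum of terms $\partial^\alpha\phi\cdot\partial^\beta\nabla_x p_i$. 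Each such term tends to $0$ at $\cH$, because $\partial^\alpha\phi\to0$ there and the other factor is bounded. A standard lemma then finishes the argument: a function that is smooth off a closed null set, continuous, and whose derivatives all extend continuously by $0$ is $C^\infty$. That this hypothesis holds here follows from the mean value theorem along segments; such segments can be chosen to avoid $\cH$ generically, or one can argue inductively on the order of the derivative. Hence the $Y^i$ are smooth global generators of $\widehat\cD$, and $\widehat\cD$ is smooth.
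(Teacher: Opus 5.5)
Your proof is correct and uses the same device as the paper: multiply generators by the flat function $\prod_k\exp(-1/f_k^2)$, which vanishes exactly on $\cH$. Your explicit assumption that $\cH$ contains every hyperplane on which some pre-activation vanishes is also what the paper tacitly needs. The difference lies in the generators.

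The paper multiplies the rows of $\mathbb{J}_x(s)$, which are locally constant off $\cH$. On each region every derivative of $X^\mu$ is then a constant vector times a derivative of the flat function, so the extension across $\cH$ is nearly immediate. You multiply $\nabla_x p_i$ instead. That costs you Step 3 (uniform bounds on derivatives of softmax composed with the finitely many affine pieces), but your fields span exactly $\cD_x$ off $\cH$.

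That gain is real. By Obs.~\ref{score-distr}, $\cD_x=\im(\mathbb{J}_x(s)^TB)$, which is in general a proper subspace of $\Span\{\nabla_x s_\mu\}=\im(\mathbb{J}_x(s)^T)$. When the $\nabla_x s_\mu$ are independent the dimensions are $C-1$ versus $C$, consistent with $\rank D<C$ in Prop.~\ref{prop1}. So the paper's fields, read literally, span a larger distribution. The fix on its side is to use the columns of $\mathbb{J}_x(s)^TB$, which are still piecewise constant.

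One correction in your Step 3: the gluing lemma, as you state it, is false for an arbitrary closed null set. The Cantor function is continuous, locally constant off the Cantor set, and all its derivatives extend by $0$, yet it is not $C^1$. The lemma holds here because $\cH$ is a finite union of hyperplanes. A slightly shifted generic segment then meets $\cH$ in finitely many points, the fundamental theorem of calculus applies piecewise, and continuity lets you remove the shift. The paper needs the same gluing step and leaves it under ``clearly''.
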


\begin{proof}
The aim is to find smooth vector fields that span this distribution.
Clearly for $x \not\in \cH$ (open set) there is nothing to show, $\widehat{\cD}$ is smooth,
by Prop. \ref{tf-res} and \citep{tf2024}.
Let $f_k(x)=0$ be defining the hyperplane $H_k \in \cH$ (finite union of such
hyperplanes). Let us define the smooth functions:
$$
\rho_k(x)=\begin{cases}
			\exp(-1/f_k^{{2}}(x)) \hbox{ for }f_k(x) \neq 0\\
			0 \hbox{ otherwise}
\end{cases}
$$
Let us define the following vector fields $X^{\mu}$ in the data space $\R^d$:
{$$ (X^{\mu}_x)_j=\left(\prod_{k=1}^{K}\rho_{k}(x)\right)
\mathbb{J}_x(s)^\mu_j, \qquad j=1, \dots, d $$}
where $\mathbb{J}_x(s)^\mu_j$ is the $j$-th entry of the
$\mu$-th row of the Jacobian  $\mathbb{J}_x(s)$
of the score function at $x$, {and $K$ is the total number of hyperplanes}.
Clearly 
{$X^{\mu}$} are smooth vector fields in $\R^d$.

Let us define $\widetilde{D}$ as the distribution spanned by the smooth
vector fields {$X^{\mu}$}.
Clearly $\widetilde{D}_x=\widehat{D}_x=0$ for $x \in \cH$. By Obs. \ref{score-distr}
we have that $\widetilde{D}_x=\widehat{D}_x$ for $x \not\in \cH$ and this concludes
the proof.
\end{proof}

In the next section we tackle the question of integrability of the
singular smooth distribution $\widehat{\cD}$.


\section{Integrability of the DIM distribution}\label{foliat-sec}

A natural question for a smooth distribution on a manifold $M$
regards its {\sl integrability}
i.e. whether the distribution
assignment at each point $x \in M$ of a subspace of the tangent space $T_xM$
coincides (locally) with the tangent bundle of a submanifold of $M$.
We briefly recap the main results and then we show integrability for
the singular smooth distribution $\widehat{\cD}$, introduced in the previous section.

\begin{definition}
{\rm We say that a distribution $\Delta$ is (locally) \textit{integrable}
if through all $x \in M$ there exists a submanifold $L$ of $M$ with $TL=\Delta|_L$.}
\end{definition}

Notice that in the definition we have that $\Delta$ must have constant
rank along the submanifold $L$, though in general over $M$ the rank may not be constant.
Also, we recall that the rank has a lower semicontinuity property.

\medskip
In case of constant rank, i.e. regular distributions, the main result
on integrability is given by Frobenius Theorem.

\begin{definition}
{\rm We say that a smooth distribution $\Delta$ is \textit{involutive} if for every point 
$ x\in M$ there exist smooth vector fields $\{X_{1},\ldots ,X_{n}\}$ such
that $\Delta_y=\mathrm{span} \{X_{1,y},\ldots ,X_{n,y}\}$ 
and the Lie bracket $[X_{i},X_{j}]_y \in \Delta_y$ for $y$ in a neighbourhood of $x$.}
\end{definition}

\begin{theorem}[Frobenius Theorem]
Let $\Delta$ be a smooth {regular} distribution on a manifold $M$. Then $\Delta$ is integrable
if and only if it is involutive.
\end{theorem}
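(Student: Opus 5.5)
This is the classical Frobenius theorem, and the plan is to prove the two implications separately, working locally in charts. Let $k$ be the constant rank of $\Delta$.

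\emph{Integrable $\Rightarrow$ involutive.} Fix $x\in M$. Choose smooth local fields $X_1,\dots,X_k$ spanning $\Delta$ near $x$; this is possible because $\Delta$ is smooth and regular, so from any local spanning family we can select $k$ fields that are independent at $x$, and they stay independent nearby. For each $y$ near $x$ there is a leaf $L$ through $y$ with $TL=\Delta|_L$. The restrictions $X_i|_L$ are therefore tangent to $L$. The bracket of two vector fields tangent to an (immersed) submanifold is again tangent to it, since it is the pushforward of the bracket of the restricted fields. Hence $[X_i,X_j]_y\in T_yL=\Delta_y$ for every $y$ near $x$, which is involutivity.

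\emph{Involutive $\Rightarrow$ integrable.} The plan is to build flat coordinates.
\begin{enumerate}
\item Work in a chart around $x$ and reorder coordinates so that the projection of $\Delta_x$ onto the first $k$ coordinate directions is an isomorphism.
\item Replace the spanning fields by the unique local frame $Y_1,\dots,Y_k$ of $\Delta$ of the form $Y_i=\partial_i+\sum_{a>k} c_i^a\,\partial_a$, obtained by inverting the $k\times k$ block of coefficients, which is smooth.
\item Use involutivity to show that these fields commute. Each $[Y_i,Y_j]$ lies in $\Delta$, but it has no $\partial_1,\dots,\partial_k$ components. An element of $\Delta$ written in the frame $\{Y_l\}$ is determined by exactly those components, so $[Y_i,Y_j]=0$.
\item Consider the commuting flows $\Phi(t_1,\dots,t_k,z)=\phi^{Y_1}_{t_1}\circ\dots\circ\phi^{Y_k}_{t_k}(z)$, with $z$ ranging over a transversal slice $\{x^1=\dots=x^k=0\}$. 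Because the flows commute, $\partial\Phi/\partial t_i=Y_i\circ\Phi$ for every $i$, not just the first. At the origin the differential of $\Phi$ is invertible, so by the inverse function theorem $\Phi^{-1}$ is a chart.
\item In this chart $\Delta=\Span\{\partial_{t_1},\dots,\partial_{t_k}\}$. The slices $\{z=\mathrm{const}\}$ are then submanifolds $L$ with $TL=\Delta|_L$, one through each nearby point, giving integrability.
\end{enumerate}

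The main obstacle is step 3 together with the proof that $\Phi$ is a chart in step 4. Commutation of the flows requires $[Y_i,Y_j]=0$, not merely involutivity; this is why the normalized frame of step 2 is essential. I would first verify the coefficient argument of step 3 carefully, and then invoke the standard lemma that vector fields with vanishing brackets have commuting flows.
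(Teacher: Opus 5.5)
The paper gives no proof of this statement. It is quoted in Section~4 as the classical Frobenius theorem, so there is no argument of the authors to compare against.

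Your proposal is the standard flat-chart proof, and it is correct:
\begin{itemize}
\item the normalized frame $Y_i=\partial_i+\sum_{a>k}c_i^a\partial_a$;
\item the coefficient argument showing that $[Y_i,Y_j]=0$;
\item commuting flows from a transversal slice;
\item the inverse function theorem giving the chart.
\end{itemize}

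Two small points deserve a line each when you write it out.

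First, the paper's definition of involutivity refers to one particular spanning family. You should first select $k$ of those fields that are independent at $x$; by constant rank they form a local frame. Then say explicitly that $[Y_i,Y_j]\in\Delta$ follows from the identity $[fX,gZ]=fg[X,Z]+f(Xg)Z-g(Zf)X$.

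Second, in the direction integrable $\Rightarrow$ involutive, the submanifold $L$ may be only immersed. The restriction of an ambient field tangent to $L$ is still a smooth field on $L$, because an immersion is locally an embedding. Bracket naturality then applies.
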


For singular distributions, we have a generalization of Frobenius Theorem
that we will use in the sequel.

\begin{theorem}[Stefan-Sussmann]
A distribution $\Delta$ is locally integrable if and only if
$\Delta$ is invariant under the flow of the vector fields in $\Gamma(\Delta)$ i.e.
$\Delta$ is generated by a family of smooth vector
fields, and it is invariant with respect to their flow, that is:
\beq\label{key-eq}
(\phi_t^X)_*(\Delta_x) \subset \Delta_{\phi^X_t(x)}
\eeq
for all $x \in M$, where $\phi_t^X$ is the flow of the vector field $X$ is the given family.
\end{theorem}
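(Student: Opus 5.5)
The statement immediately above is the classical Stefan--Sussmann theorem. I would prove it by the standard orbit argument, proving the two implications separately. Throughout, $\Gamma(\Delta)$ denotes the family of smooth (local) vector fields $X$ with $X_y\in\Delta_y$ for all $y$. I assume $\Delta$ is smooth in the sense of Definition \ref{distrib-def}, so that locally it is spanned by elements of $\Gamma(\Delta)$.

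\emph{Necessity.} Suppose $\Delta$ is locally integrable. Take $X\in\Gamma(\Delta)$ and $x\in M$, and let $L$ be the integral submanifold through $x$. Since $X$ is tangent to $L$, the restriction $X|_L$ is a smooth vector field on $L$. Its integral curve starting at $x$ is then also an integral curve of $X$ in $M$, so by uniqueness $\phi^X_t(x)\in L$ for small $|t|$. Moreover $\phi^X_t$ restricts to the local flow of $X|_L$, a local diffeomorphism of $L$. Hence $(\phi^X_t)_*$ maps $T_xL=\Delta_x$ into $T_{\phi^X_t(x)}L=\Delta_{\phi^X_t(x)}$, which is (\ref{key-eq}). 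To extend this to all $t$ for which the flow is defined, I would use the semigroup property $\phi^X_{t+s}=\phi^X_t\circ\phi^X_s$ together with a connectedness argument along the integral curve.

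\emph{Sufficiency, step 1: the rank is constant along flows.} Applying (\ref{key-eq}) to $t$ and to $-t$ gives
$$(\phi^X_t)_*\Delta_x=\Delta_{\phi^X_t(x)}.$$
Hence $\dim\Delta$ is constant along every integral curve of every $X\in\Gamma(\Delta)$, and therefore along any finite composition of such flows.

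\emph{Sufficiency, step 2: construct a candidate leaf.} Fix $x$ and set $k=\dim\Delta_x$. Choose $X_1,\dots,X_k\in\Gamma(\Delta)$ with $X_{1,x},\dots,X_{k,x}$ a basis of $\Delta_x$; these exist because $\Delta$ is locally spanned by smooth fields. Define
$$\Psi(t_1,\dots,t_k)=\phi^{X_1}_{t_1}\circ\cdots\circ\phi^{X_k}_{t_k}(x)$$
on a small neighbourhood of $0\in\R^k$. The differential of $\Psi$ at $0$ sends the coordinate vectors to $X_{1,x},\dots,X_{k,x}$, so it is injective. Shrinking the domain, $\Psi$ is an embedding and its image $L$ is a $k$-dimensional embedded submanifold through $x$.

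\emph{Sufficiency, step 3: show $TL=\Delta|_L$.} This is the step I expect to be the main obstacle. At $y=\Psi(t)$, the partial derivative $\partial\Psi/\partial t_i$ equals $(\phi^{X_1}_{t_1}\circ\cdots\circ\phi^{X_{i-1}}_{t_{i-1}})_*X_{i,z}$ for an intermediate point $z$, and $X_{i,z}\in\Delta_z$. Applying (\ref{key-eq}) repeatedly, this vector lies in $\Delta_y$, so $T_yL\subset\Delta_y$. On the other hand, $y$ is reached from $x$ by a composition of flows of fields in $\Gamma(\Delta)$, so step 1 gives $\dim\Delta_y=k=\dim T_yL$. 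Therefore $T_yL=\Delta_y$, and $L$ is a local integral submanifold through $x$.

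The care needed in step 3 is in handling compositions of flows and their domains of definition. I would manage this by working in a sufficiently small neighbourhood of $x$ and appealing to the group-like property of flows, so that every intermediate point $z$ stays in a region where all the flows involved are defined.
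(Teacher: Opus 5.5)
The paper does not prove this statement. It quotes the classical theorem of Stefan and Sussmann, pointing to Lavau for details, so there is no in-paper argument to compare against. Your proof is the standard argument from that literature, using the composed-flow map $\Psi(t)=\phi^{X_1}_{t_1}\circ\cdots\circ\phi^{X_k}_{t_k}(x)$ together with constancy of rank along flows, and it is correct.

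Your explicit assumption that $\Delta$ is smooth is essential, not cosmetic. It is the hypothesis under which the classical theorem is stated. The paper's definition of local integrability does not force smoothness, and without it the ``only if'' direction of the statement as phrased fails. For example, on $\mathbb{R}^2$ set $\Delta_{(a,0)}=\mathrm{span}\{\partial_x\}$ and $\Delta_{(a,b)}=0$ for $b\neq 0$. Segments of the $x$-axis and single points give integral submanifolds through every point. However, any smooth section of $\Delta$ vanishes off the axis, hence by continuity on it as well, so $\Delta$ is not generated by smooth vector fields.

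There are two smaller points.
\begin{itemize}
\item In your sufficiency argument, only the flows of the chosen $X_1,\dots,X_k$ are used, in both step 1 and step 3. So if you pick them from a given generating family, you obtain the stronger form of the theorem, in which invariance is required only under that family. This also settles the ambiguity in the paper's ``i.e.'' clause.
\item When you extend necessity to all $t$, the closedness half of the connectedness argument needs the local equality $(\phi^X_\tau)_*\Delta_z=\Delta_{\phi^X_\tau(z)}$ at each point $z$ of the integral curve, not just the inclusion. Your local argument does supply this equality, because $\phi^X_\tau$ restricts to a local diffeomorphism of the integral manifold through $z$; it is worth saying so explicitly.
\end{itemize}
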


The condition (\ref{key-eq})
on the flow in Stefan-Sussmann Theorem is stronger than involutivity,
see \citep{lavau} for more details.

We now turn to the example of $\cD$ the distribution on the data space
given via the DIM (Data Information Matrix) in (\ref{distr-def}).
In \citep{gf} integrability is proven in an dense open subset of the data space.

\begin{proposition}\label{gf-thm}
Consider 
a deep ReLU neural network classifier, $p$ given by softmax,
$\cD$ the DIM distribution. Let $\cH$ be the union of hyperplanes containing the singularities of 
$\cD$.
Then, 
{through} all $x \in \R^d$ $\setminus \mathcal{H}$, 
there exists a local submanifold $L \subset \R^d$, such that for all $z \in N$, 
$\cap L$, with $N$ a neighbourhood
of $x$ we have $T_zL=(\ker \cD_z)^\perp$.
\end{proposition}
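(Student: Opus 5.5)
The plan is to work locally on the open set $\R^d\setminus\cH$. By Prop.~\ref{tf-res}, $\cH$ is a finite union of hyperplanes, so its complement is open. The first step is to describe the score function there. Each ReLU pre-activation is an affine function of $x$, and the hyperplanes of $\cH$ are exactly where these pre-activations change sign. Hence, for $x\notin\cH$, there is a neighbourhood $N$ of $x$, chosen inside a single connected component of $\R^d\setminus\cH$, on which every ReLU activation pattern is constant. On $N$ the score $s(\cdot,w)$ is therefore affine, $s(z,w)=Wz+b$ for a fixed matrix $W$ (a product of the $W_i$ with fixed $0/1$ diagonal masks). In particular the Jacobian $\mathbb{J}_z(s)=W$ is constant on $N$.

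Next I use Obs.~\ref{score-distr}. It gives $\cD_z=\im(\mathbb{J}_z(p)^T)=\im(\mathbb{J}_z(s)^TB)=\im(W^TB)$ for $z\in N$, and the right-hand side does not depend on $z$. So on $N$ the DIM distribution is the constant linear subspace $V:=\im(W^TB)\subset\R^d$. It is then regular of rank $k=\dim V<C$ (by Prop.~\ref{prop1}) and obviously smooth, being spanned by constant vector fields. Here I should also match the notation of the statement. By Prop.~\ref{prop1}, $\ker D(z,w)=\cD_z^\perp$, so "$(\ker\cD_z)^\perp$" is to be read as $(\ker D(z,w))^\perp=\im D(z,w)=\cD_z$, since $D$ is symmetric.

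Integrability now follows directly. Take $L=(x+V)\cap N$, the affine slice through $x$ parallel to $V$. It is an embedded submanifold of $N$, and for every $z\in L$ we have $T_zL=V=\cD_z=(\ker D(z,w))^\perp$. Alternatively one can invoke the Frobenius Theorem: constant vector fields spanning $V$ have vanishing Lie brackets, so the distribution is involutive and regular on $N$. The direct construction is simpler and identifies the leaves explicitly as pieces of affine subspaces.

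The main obstacle is the first step: making sure that every point outside $\cH$ has a neighbourhood with a fixed activation pattern. This holds only if $\cH$ contains all the loci where some neuron's pre-activation vanishes. Deeper pre-activations are affine only piecewise, so these loci are a priori piecewise-linear sets rather than hyperplanes. I would handle this by taking the hyperplane pieces from the region-by-region argument behind Prop.~\ref{tf-res}, and arguing inductively layer by layer. Suppose that on an open region the first $j$ layers have a fixed pattern. Then the $(j+1)$-st pre-activations are affine there, so their zero sets are contained in hyperplanes. The finitely many hyperplanes obtained this way are included in $\cH$, so off $\cH$ no pre-activation vanishes, and by continuity the pattern is locally constant. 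The convention that the ReLU derivative at $0$ equals $1$ only matters on $\cH$, so it plays no role here.
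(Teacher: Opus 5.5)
Your argument is correct, but it takes a different route from the paper's. The paper proves this proposition by quoting two outside facts. First, off $\cH$ the distribution $\cD$ is smooth and of constant rank (Prop.~\ref{tf-res} and \citep{tf2024}). Second, it is involutive there, which is a Lie-bracket computation on the fields $\nabla_x p_i$ carried out in \citep{gf}. The Frobenius Theorem then gives the local leaves.

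You instead show that off $\cH$ the activation pattern is locally constant. By Obs.~\ref{score-distr}, $\cD$ is then not merely of constant rank but locally \emph{constant}, $\cD_z=W^T(\mathbf{1}^\perp)$. This is the same fact the paper itself uses inside the proof of Lemma~\ref{key-lem}. You then write the leaves down explicitly as the affine slices $(x+V)\cap N$.

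Your route buys three things:
\begin{itemize}
\item a self-contained proof with an explicit description of the leaves;
\item involutivity becomes trivial;
\item your layer-by-layer induction justifies that $\cH$ captures every activation switch, a point the paper delegates to \citep{tf2024}.
\end{itemize}
The price is that your route depends entirely on piecewise linearity of the score. The involutivity-plus-Frobenius framing, which the paper also reuses in Prop.~\ref{Dint}, would still apply where $\cD$ has constant rank and is involutive without being locally constant.

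One small imprecision in your last paragraph concerns degenerate pre-activations. A deeper pre-activation can be constant, even identically zero, on an activation region. Its zero set is then not contained in any hyperplane, so ``off $\cH$ no pre-activation vanishes'' can fail. What you actually need still holds: the activation mask is locally constant off $\cH$, using the convention $\ReLU'(0)=1$ in the identically-zero case. You should state that instead.
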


In fact, since $\cD$ has constant rank in an open set of $\R^d$, 
we have that the DIM distribution
is integrable in such open set. This proposition is proven via
Frobenius Theorem  (involutivity has been proved in \cite{gf}), which however is not available for the distribution $\cD$ and its
smooth extension $\widehat{\cD}$ for the whole $\R^d$.
However we can show that $\widehat{\cD}$ is integrable as an application of the
Stefan-Sussmann Theorem.

\begin{proposition}\label{Dint}
Let the notation be as above. The singular distribution
$\widehat{\cD}$ is integrable
in the data space.
\end{proposition}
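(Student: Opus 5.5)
We verify the flow-invariance condition (\ref{key-eq}) of the Stefan-Sussmann Theorem for the family of smooth vector fields $X^\mu$, $\mu=1,\dots,C$, built in the proof that $\widehat{\cD}$ is smooth. Recall $X^\mu=\rho\,\nabla_x s_\mu$ with $\rho=\prod_{k=1}^K\rho_k$. These fields span $\widehat{\cD}$ at every point. The argument splits the data space into $\cH$ and its complement $U=\R^d\setminus\cH$.

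First, every $X^\mu$ vanishes identically on $\cH$, since $\rho$ does. Hence each point of $\cH$ is a fixed point of every flow $\phi^{X^\mu}_t$, and no trajectory starting in $U$ can reach $\cH$ in finite time, by uniqueness of integral curves. So the flows preserve both $\cH$ and $U$. On $\cH$ the condition (\ref{key-eq}) is trivial, because $\widehat{\cD}_x=0$ there and the pushforward of $0$ is $0$. The leaves through points of $\cH$ are therefore the points themselves, which are $0$-dimensional submanifolds tangent to $\widehat{\cD}$.

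Second, consider a point $x\in U$. By Prop.~\ref{tf-res}, $U$ is a finite union of open convex polyhedral regions. On each region the ReLU activation pattern is constant, so $s$ is affine there. Consequently $\nabla_x s_\mu$ is a constant vector $v_\mu$ on each region, and $\widehat{\cD}=\cD_s=\Span\{v_1,\dots,v_C\}$ is a constant subspace $V$ on it (using Obs.~\ref{score-distr}). A flow line of $X^\mu=\rho\, v_\mu$ is a reparametrized straight segment in the direction $v_\mu$. It stays inside the convex region, since it cannot cross $\cH$. The differential of the flow $\phi_t^{X^\mu}$ preserves the translation-invariant subspace $V$, because
\[
\frac{d}{dt}\,d\phi_t(\xi)=d X^\mu\bigl(d\phi_t(\xi)\bigr)=v_\mu\,\bigl(d\rho\cdot d\phi_t(\xi)\bigr)\in V .
\]
Hence $d\phi_t(\xi)-\xi\in V$, and $d\phi_t(V)\subset V=\widehat{\cD}_{\phi_t(x)}$. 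Alternatively, one can argue directly that $\widehat{\cD}$ is a regular, involutive distribution on each region, being constant, and apply Prop.~\ref{gf-thm} or Frobenius. The leaves on a region are then the intersections of the region with the affine subspaces $x+V$.

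The main obstacle is the Stefan-Sussmann requirement that invariance hold for all fields in the generating family, globally, including flows that might approach $\cH$. The first step resolves this: the flat function $\rho$ makes $\cH$ invariant, and the time-reversed flow gives the same invariance in the other direction. One subtlety to check is that $\phi_t$ is defined on its maximal domain; completeness is not needed for local integrability. Another is that the rank jumps from $\dim V$ on the regions to $0$ on $\cH$, which is allowed for singular foliations. Combining the two cases, Stefan-Sussmann gives integrability of $\widehat{\cD}$ on all of $\R^d$.
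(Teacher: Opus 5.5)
Your proof is correct and follows the paper's route. You split $\R^d$ into $\cH$, where $\widehat{\cD}=0$ and the flow condition (\ref{key-eq}) is trivial, and its complement, whose regions the flows cannot leave in finite time and on which $\widehat{\cD}$ has constant rank and is involutive. What you add is detail the paper leaves terse: uniqueness of integral curves at the zeros on $\cH$, the variational equation giving $d\phi_t(V)\subset V$, and the explicit leaves $(x+V)\cap R$.

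One small repair is needed, and it is inherited from the paper's proof that $\widehat{\cD}$ is smooth. On a region you have $\cD_x=\mathbb{J}_x(s)^T(\mathbf{1}^\perp)=\Span\{v_\mu-\tfrac{1}{C}\sum_\nu v_\nu\}$. This can be strictly smaller than $\cD_s=\Span\{v_\mu\}$; for example, $\dim\cD_s=C>\rank D$ when the $v_\mu$ are linearly independent. So take as generators $\rho\,(v_\mu-\tfrac{1}{C}\sum_\nu v_\nu)$, i.e.\ $\rho$ times the gradients of the centered scores $Bs$. With this $V$ your argument goes through verbatim.
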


\begin{proof}
To prove this it suffices to verify the integrability condition of the Stefan-Sussmann Theorem.
Let $\cH$ be the finite union of hyperplanes as in the definition of $\widehat{\cD}$
(Def. \ref{Dhat-def}). For $x \not\in \cH$, the flow $\phi^X$ of any vector field $X \in
\widehat{\cD}$ never reaches the points in $\cH$ in a finite time. Thus, being constant
rank, smooth and involutive (Props. \ref{tf-res}, \ref{gf-thm}),
the distribution $\widehat{\cD}$ satisfies the
Stefan-Sussmann condition (\ref{key-eq}) on the open region $\R^d \setminus \cH$.
Now we look at $x \in \cH$.
Since the differential of the flow of a vector field is a linear map,
it always sends 0 to 0. So the integrability condition is trivially satisfied:
$$
(\phi^X_t)_*:0_x\mapsto 0_{\phi^X_t(x)}
$$
This proves that the distribution $\widehat{\mathcal{D}}$ is integrable.

\end{proof}

\section{Conclusions}

This paper discusses distributions and foliations emerging from the
Data Information Matrix (DIM) the analogue in the data space of the Fisher Information
Matrix. In particular, we show that in general the DIM distribution, that is,
the distribution spanned by the
probability gradients of a Deep Learning system
is non smooth, for parameters in general position in the
data space, i.e. not on a measure zero set, which is
defined by the non null sets of polynomial functions.
A suitable modification of the DIM distribution, leads to a singular smooth distribution,
satisfying the integrability conditions of the Stefan-Sussmann theorem, thus
leading a well defined notion of integral manifold in data space.
Experimental evidence in \citep{gf, tf2024}
links such integral manifolds with the dataset the model
was trained with, future investigation will furtherly investigate this relation.

\section*{Acknowledgments}
R. F. and G. N. wish to thank Prof. D. Miyamoto for valuable comments and suggestions.
R. F. and E. L. are partially supported by PNRR (PE12) MNESYS, INDAM-GNSAGA,
CaLISTA CA 21109, CaLIGOLA MSCA-2021-SE- 01-101086123, MSCA-DN CaLiForNIA - 101119552,
PNRR National Center for HPC, Big Data and Quantum Computing, INFN Sezione Bologna, SimQuSec.
E. T. benefited from financial support from the French government managed by the National Agency for Research under the France 2030 program, with the reference ANR-23-PEIA-0004.

\bibliography{main}

\appendix

\section{Smoothness of the DIM distribution}\label{smooth-app}

In this section we prove that the DIM distribution $\cD$ on the data space $\R^d$
is non smooth for the parameters
chosen in an open dense set 
in the parameter space $\R^n$. 
We use the same notation
as in Sec. \ref{distr-sec} for the score function expression (\ref{score-def}),
the set $\cH$
as in Prop. \ref{tf-res}, etc.
{
As expressed in Obs. \ref{obs-sm}, there is no change in the smoothness
of $\cD$ if we replace the distribution $\cD$ with $\cD_s$ generated by the gradients
of the scores. Hence, from now on, in this appendix,
we assume $\cD$ to be generated by the gradients of score
functions.}  

\medskip
To simplify our treatment, 
we look at points in the dataspace $\R^d$ which are not
in the intersections of two or more hyperplanes in $\cH$, the argument for generic points
being the same.

\medskip
Let $H$ be an hyperplane in $\cH$ defined by $\ReLU_k$ in the expression of
the score function at the $k$-th layer. Define $R_\pm$ the two regions of
$\mathbb{R}^d\setminus H$ as follows. {Define
$R_+$ as 
$$
R_+=\{x \in \mathbb{R}^d\setminus H \,|\, \mathbb{J}_x(\ReLU_k)_{ii}=0\}
$$
where $\mathbb{J}_x(\ReLU_k)$ is the Jacobian of the $\ReLU_k$ function at $x$.}
Similarly define $R_-$, taking those $x$ with $\mathbb{J}_x(\ReLU_k)_{ii}=1$.
Since we assume ReLU functions have value $1$ on their non-smooth point, we have
that $\mathbb{J}_x(\ReLU_k)_{ii}=1$ on $H$. We define $\mathbb{J}_x(s)_\pm$
in $H \cup R_\pm$ as follows.
$\mathbb{J}_x(s)_\pm = \mathbb{J}_x(s)$ for $x \in R_\pm$. Then, for $x \in H$,
$\mathbb{J}_x(s)_+$ in its expression has the factor $\mathbb{J}_x(\ReLU_k)$ with
$\mathbb{J}_x(\ReLU_k)_{ii}=0$,
while $\mathbb{J}_x(s)_-$ in its expression has the factor $\mathbb{J}_x(\ReLU_k)$
with $\mathbb{J}_x(\ReLU_k)_{ii}=1$.
From the very definitions, we have $\mathbb{J}_x(s)\big|_H=\mathbb{J}_x(s)_-$
{and in general $\mathbb{J}_x(s)_-\neq
\mathbb{J}_x(s)_+$.}

\begin{observation}\label{obsJ}
Notice that the rows of $\mathbb{J}_x(s)_\pm$ are smooth in their respective domains $H \cup R_\pm$.
Indeed they are the smooth vector fields $\nabla_x s_{\mu}(w)$, which are constant as $x$
varies in $\R^d$, for given fixed parameters $w \in \R^n$ (the score depends linearly on $x$).
We denote the rows of $\mathbb{J}_x(s)_\pm$ as $\nabla_x s_{\mu,\pm}$. We clearly have
$\nabla_x s_{\mu}=\nabla_x s_{\mu,\pm}$ for $x$ in $R_\pm$,
while $\nabla_x s_{\mu}=\nabla_x s_{\mu,-}$ for $x$ in $H$.
Moreover, by the very definitions, we have that the DIM distribution
$\cD$ in $H \cup R_-$ is spanned by
$\{\nabla_x s_{\mu,-}\}$ both point-wise and locally in $R_-$ only.
Similarly, in $R_+$ $\{\nabla_x s_{\mu,+}\}$ span $\cD$ both point-wise and locally.
\end{observation}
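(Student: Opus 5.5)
The plan is to make the chain rule for the score function (\ref{score-def}) explicit, and to use the fact that on each linear region every ReLU Jacobian is a constant diagonal $0/1$ matrix. Fix the weights $w$. By the convention that each ReLU is assigned derivative $1$ at its non-differentiable point, the chain rule gives, at every $x\in\R^d$,
$$
\mathbb{J}_x(s)=W_\ell\,\Lambda_{\ell-1}(x)\,W_{\ell-1}\cdots\Lambda_1(x)\,W_1,
$$
where $\Lambda_j(x)=\mathbb{J}_x(\ReLU_j)$ is diagonal with entries in $\{0,1\}$. The entries are read off from the signs of the pre-activations, and ties are resolved to $1$.

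First, I localize. Take $x_0\in H\cup R_\pm$ that does not lie in the intersection of $H$ with any other hyperplane of $\cH$, which is the standing assumption of the appendix. Since $\cH$ is a finite union of closed hyperplanes, there is an open ball $U\ni x_0$ meeting no hyperplane of $\cH$ other than $H$. On $U$ every pre-activation except the $i$-th entry at layer $k$ has constant sign. I will argue this inductively on the layers: on each such piece the earlier layers are affine, so the next pre-activations are affine, and their zero sets are exactly the hyperplanes collected in $\cH$ (Prop.~\ref{tf-res}). Hence all $\Lambda_j$, and all diagonal entries of $\Lambda_k$ except the $(i,i)$ one, are constant on $U$.

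Next, I define $\mathbb{J}(s)_\pm$ by the same product with $(\Lambda_k)_{ii}$ frozen at $0$ for $+$ and at $1$ for $-$. These are constant matrices on $U$. Their rows $\nabla_x s_{\mu,\pm}$ are therefore constant, hence smooth, vector fields, and they extend smoothly across $H$; this gives the first claim. Comparing with the true value of $(\Lambda_k)_{ii}$ gives the pointwise identities. On $R_+$ we have $(\Lambda_k)_{ii}=0$, so $\nabla_x s_\mu=\nabla_x s_{\mu,+}$. On $R_-$ we have $(\Lambda_k)_{ii}=1$, and on $H$ the convention also gives $(\Lambda_k)_{ii}=1$, so $\nabla_x s_\mu=\nabla_x s_{\mu,-}$ on $H\cup R_-$. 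By Obs.~\ref{score-distr} the distribution $\cD$ (in the form $\cD_s$) is spanned pointwise by $\{\nabla_x s_{\mu,-}\}$ on $H\cup R_-$ and by $\{\nabla_x s_{\mu,+}\}$ on $R_+$.

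Finally, the local statement follows because $R_\pm\cap U$ are open. Any point of $R_\pm$ has a neighbourhood contained in $R_\pm$, and on that neighbourhood the constant fields span $\cD$. This is why the local spanning is asserted only in $R_-$ and not in $H\cup R_-$: every neighbourhood of a point of $H$ meets $R_+$, where the spanning set switches.

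I expect the main obstacle to be the localization step, that is, justifying that near $x_0$ only the single unit defining $H$ changes activation. This requires the inductive argument that each region's pre-activations are affine with zero sets among the hyperplanes of $\cH$, as established in \citep{tf2024} and recorded in Prop.~\ref{tf-res}. A second point needing care is the sign bookkeeping: $R_+$ is where the unit is inactive, and $H$ must be attached to $R_-$ through the value-$1$ convention.
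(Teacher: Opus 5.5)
Your argument is correct and is essentially the paper's own: the rows of $\mathbb{J}_x(s)_\pm$ are constant because the score is linear on each activation region, the value-$1$ convention attaches $H$ to the $R_-$ side, and the spanning claims follow from the definitions together with Obs.~\ref{score-distr}, holding locally only on the open sets $R_\pm$. Your localization to a ball $U$ avoiding the other hyperplanes of $\cH$ sharpens the paper's looser claim that the rows are constant on all of $\R^d$; note that, like the paper, you tacitly assume the $i$-th unit of layer $k$ is the only unit switching across $H\cap U$ (a genericity condition), and for $x_0\in R_\pm$ you need $x_0$ to avoid every other hyperplane of $\cH$, not merely $H\cap H'$.
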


\medskip
\begin{lemma}\label{key-lem}
Let $H$ be an hyperplane and $R_\pm$ the two regions of
$\mathbb{R}^d\setminus H$. 
Let $\mathbb{J}_x(s)_\pm$ be 
as above.
Then, if the distribution $\cD$ is smooth we have:
$$
\im(\mathbb{J}_x(s)^T_+)\supseteq \im(\mathbb{J}_x(s)^T_-)\hbox{ for all points } x \in  H
$$
\end{lemma}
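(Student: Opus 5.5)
We argue by lower semicontinuity of the rank combined with continuity of smooth local generators, approaching $H$ from the side $R_+$. Suppose $\cD$ is smooth and fix $x_0\in H$, which by the standing simplification does not lie on any other hyperplane of $\cH$. Smoothness gives a neighbourhood $U$ of $x_0$ and smooth vector fields $X^1,\dots,X^r\in\fX(U)$ with $\cD_y=\Span\{X^1_y,\dots,X^r_y\}$ for all $y\in U$. We may shrink $U$ so that it meets no other hyperplane of $\cH$. Then $U\setminus H=(U\cap R_+)\cup(U\cap R_-)$, and on $U\cap R_\pm$ the distribution is spanned by the constant vectors $\nabla_x s_{\mu,\pm}$ (Obs. \ref{obsJ}). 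At $x_0$ itself we have $\cD_{x_0}=\im(\mathbb{J}_{x_0}(s)^T_-)$, because of the convention that the ReLU derivative equals $1$ on $H$.

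The key step is the following. For each $i$ and every $y\in U\cap R_+$ we have $X^i_y\in\im(\mathbb{J}(s)^T_+)=:V_+$. Here $V_+$ is a \emph{fixed} linear subspace of $\R^d$, hence closed, since the rows $\nabla_x s_{\mu,+}$ do not depend on $x$ (the score is piecewise linear in $x$). Since $x_0\in H$ lies in the closure of $U\cap R_+$, we can take a sequence $y_n\in U\cap R_+$ with $y_n\to x_0$. Continuity of $X^i$ then gives $X^i_{x_0}=\lim X^i_{y_n}\in V_+$. This yields
$$\im(\mathbb{J}_{x_0}(s)^T_-)=\cD_{x_0}=\Span\{X^i_{x_0}\}\subseteq V_+=\im(\mathbb{J}_{x_0}(s)^T_+),$$
which is the claimed inclusion at $x_0$. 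Since $x_0\in H$ was arbitrary away from other hyperplanes, this proves the inclusion at all such points.

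Two points need care, and I expect both to be minor. First, $R_+$ must genuinely accumulate at every point of $H$. This holds because $H$ is a hyperplane separating $R_+$ from $R_-$: near $x_0$, the preactivation of the relevant neuron at layer $k$ is an affine function of $x$ on the local linear region, so it takes both signs arbitrarily close to $x_0$. Second, we must check that $\mathbb{J}_x(s)_+$ as defined on $H$ agrees with the constant limit of $\mathbb{J}(s)$ from $R_+$. This is immediate from Obs. \ref{obsJ}, since the other ReLU factors are locally constant near $x_0$. The points on intersections of several hyperplanes of $\cH$ are excluded by the simplification, and the same argument applies there with the appropriate adjacent region.
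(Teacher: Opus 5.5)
Your proof is correct and is essentially the paper's argument. The paper argues by contradiction with a single smooth section $Y$ of $\cD$ through a vector $v_x\in\cD_x\setminus\im(\mathbb{J}(s)^T_+)$, using that $Y^{-1}(\im(\mathbb{J}(s)^T_+))$ is closed and contains $U\cap R_+$, hence $U\cap H$; you run the same continuity-plus-closedness argument directly on the local generators $X^i$, and your opening appeal to lower semicontinuity of the rank is never used and can be dropped.
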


\begin{proof}
Suppose $\cD$ is smooth. Let $x$ be a point on the
hyperplane $H$ and $U$ be an open neighborhood of $x$ in $\R^d$.
By contradiction assume: 
$$
\hbox{Im}(\mathbb{J}_x(s)_+^T)\nsupseteq\hbox{Im}(\mathbb{J}_x(s)_-^T)
$$
Then there is some vector $v_x$, such that $v_x\in \hbox{Im}(\mathbb{J}_x(s)_-^T)$ and
$v_x\notin\hbox{Im}(\mathbb{J}_x(s)_+^T)$. 
Note that $\hbox{Im}(\mathbb{J}_x(s)_-^T)=\cD_x$.
We now show this leads to a contradiction.

Since the distribution is smooth, there exists a smooth vector field
$Y:U\rightarrow{ T\R^d}$, in the distribution $\cD|_U$, 
such that $Y(x)=v_x$. {Since $Y \in \cD|_U$, we have that $Y(y) \in
\cD_y=\hbox{Im}(\mathbb{J}_y(s)^T_+)$ for all $y \in U\cap R_+$ and $Y(z) \in \cD_z=\hbox{Im}(\mathbb{J}_z(s)^T_-)$ for all $z \in U\cap R_-$.} Notice
also that $\cD_y=\hbox{Im}(\mathbb{J}_y(s)^T_+)$ is constant as $y$ varies in
$U \cap R_+$ {(and similarly for $\cD_z$, $z\in U \cap R_-$)}. By a small
abuse of notation, identifying each tangent space at any $y$ with $\R^d$,
via canonical global coordinates, we can write
\beq\label{key-eq2}
\hbox{Im}(\mathbb{J}(s)^T_+) \subset \R^d \cong T_y\mathbb{R}^d
\eeq
dropping the index $y$, to mark the fact
it does not depend on $y$.
Hence, (\ref{key-eq2}) makes sense also for $y \in U \cap(R_+ \cup H)$,
not just for  $y \in U \cap R_+$.
We now look at:
$$
Y^{-1}(\hbox{Im}(\mathbb{J}(s)_+^T):=\{ u \in U\cap(R_+ \cup H)\, |\, Y(u) \in \hbox{Im}(\mathbb{J}(s)^T_+)\}
$$
Since $\hbox{Im}(\mathbb{J}(s)_+^T$ is a closed subspace of $\R^d$ also
$Y^{-1}(\hbox{Im}(\mathbb{J}(s)_+^T)$
is closed ($Y$ is continuous) and it contains $U \cap R_+$. Since the latter is open, it contains
also its closure, so that $U \cap H \subset
Y^{-1}(\hbox{Im}(\mathbb{J}(s)_+^T)$.
This is a contradiction since $x \in U \cap H$, so
$Y(x) \in \hbox{Im}(\mathbb{J}(s)_+^T)$,
but we are assuming $Y(x)=v_x \not\in \hbox{Im}(\mathbb{J}(s)_+^T)$.
\end{proof}

\begin{corollary}\label{cor_1}
Let $H$ be an hyperplane in $\cH$. Then $\mathbb{J}_x(s)_+$ differs from
$\mathbb{J}_x(s)_-$ by setting a set of rows of weights to 0 in $L_{W_{k+1}}$.
\end{corollary}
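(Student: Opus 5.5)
We read the statement as a direct consequence of the chain rule applied to the score function (\ref{score-def}). The plan is to factor the Jacobian of $s$ at a point $x\in H$ as a product of layer Jacobians and to compare the two one-sided choices $\mathbb{J}_x(s)_\pm$ factor by factor.

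First we write, by the chain rule,
$$
\mathbb{J}_x(s)=W_\ell\, \mathbb{J}(\ReLU_{\ell})\cdots W_{k+1}\,\mathbb{J}(\ReLU_k)\, W_k \cdots \mathbb{J}(\ReLU_1)\, W_1 .
$$
Each $\mathbb{J}(\ReLU_j)$ is a diagonal $0/1$ matrix evaluated at the pre-activation of layer $j$. We then use the standing assumption that $x$ lies on exactly one hyperplane $H\in\cH$, cut out by the vanishing of the $i$-th pre-activation of layer $k$. Every other pre-activation at $x$ is then nonzero, so every other diagonal entry of every $\mathbb{J}(\ReLU_j)$ is locally constant in a neighbourhood of $x$ and takes the same value on $R_+$, on $R_-$ and on $H$. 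We should also check that the later-layer activation patterns do not flip across $H$ near $x$. This holds because the remaining pre-activations are continuous piecewise-linear functions that are nonzero at $x$, and hence nonzero on a whole neighbourhood of $x$.

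Consequently $\mathbb{J}_x(s)_+$ and $\mathbb{J}_x(s)_-$ share every factor except $\mathbb{J}(\ReLU_k)$. In that factor they differ only in the $(i,i)$ entry, which is $0$ for the $+$ choice and $1$ for the $-$ choice, by the definitions given just before Lemma~\ref{key-lem}. The next step is to absorb this diagonal change into the adjacent weight matrix. Right-multiplying $W_{k+1}$ by $\mathrm{diag}(1,\dots,0,\dots,1)$, with the $0$ in the $i$-th slot, sets to zero the $i$-th column of $W_{k+1}$. That column is exactly the set of weights through which neuron $i$ of layer $k$ feeds layer $k+1$, which is the paper's ``set of rows of weights'' under its convention for $L_{W_{k+1}}$. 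We would therefore write
$$
\mathbb{J}_x(s)_+=W_\ell\cdots \widetilde W_{k+1}\,\mathbb{J}(\ReLU_k)_-\cdots W_1,
$$
where $\widetilde W_{k+1}$ denotes $W_{k+1}$ with those entries set to $0$. This is precisely the statement.

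The main obstacle is the bookkeeping in the locality argument, i.e.\ justifying that only one ReLU entry changes across $H$ near $x$. This is where the restriction to points on a single hyperplane enters. If the hyperplane is produced by a deeper layer, so that it is a piece of a piecewise-linear hypersurface, we would restrict to a neighbourhood on which the earlier activation patterns are fixed. On such a neighbourhood the pre-activation of layer $k$ is affine in $x$, and the same factorization goes through. A second point to flag is the row/column convention for $L_{W_{k+1}}$, which we take from the paper's notation.
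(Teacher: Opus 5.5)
Your argument is correct and is essentially the paper's own, whose proof is just the chain-rule observation that switching the $i$-th diagonal entry of $\mathbb{J}(\ReLU_k)$ from $1$ to $0$ is the same as zeroing the corresponding weights of $L_{W_{k+1}}$. Your locality check that no other activation flips near $x$, and your remark that with $W_{k+1}$ acting on column vectors it is the $i$-th column that vanishes, are extra care the paper skips, since its definition of $\mathbb{J}_x(s)_\pm$ on $H$ already builds in that only this one factor changes.
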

\begin{proof}
Recall the expression of the score function as in (\ref{score-def}):
\beq
s(x,w) = L_{W_\ell} \circ \ReLU_{\ell-1} \circ \dots \circ \ReLU_1 \circ L_{W_1}
\eeq
Setting the $n^{th}$ diagonal element of $\mathbb{J}(\mathrm{ReLU}_k)$ from 1 to
0 has the same effect of setting the $n^{th}$ row of $L_{W_{k+1}}$ to 0.
\end{proof}

\begin{observation}\label{obs_7}
Suppose that the DIM distribution $\cD$ is smooth.
Then, by Lemma \ref{key-lem} we have
$$
\im(\mathbb{J}_x(s)_+^T)\supseteq\im(\mathbb{J}_x(s)_-^T)
$$
From now on,
we assume there exists at least one $x \in \R^d$ for which the inequality
is strict. This is the same as assuming the distribution is non constant {i.e. not spanned by the columns of a constant matrix},
which is somehow a degenerate situation, which is not observed experimentally. 
{It is important not to confuse \textit{non constant} with non regular. A regular
distribution is a constant rank distribution, while a constant distribution
is one spanned by the same subspace at all points in euclidean space.}
\end{observation}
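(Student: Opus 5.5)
The statement has two parts. The first is the inclusion $\im(\mathbb{J}_x(s)_+^T)\supseteq\im(\mathbb{J}_x(s)_-^T)$ under the smoothness hypothesis. The second is that strictness at some point is the same as $\cD$ being non constant. I would treat them in this order.

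For the inclusion I would invoke Lemma \ref{key-lem} directly. Suppose $\cD$ is smooth and fix a hyperplane $H\in\cH$ with its regions $R_\pm$. The lemma gives $\im(\mathbb{J}_x(s)_+^T)\supseteq\im(\mathbb{J}_x(s)_-^T)$ for every $x\in H$. By Observation \ref{obsJ}, both $\im(\mathbb{J}(s)_\pm^T)$ do not depend on $x$, and they coincide with $\cD$ on $R_\pm$. Hence the inclusion is really a statement about two fixed subspaces of $\R^d$: the value of $\cD$ on $R_+$ contains the value of $\cD$ on $H\cup R_-$. Since $\cD_x=\im(\mathbb{J}_x(s)_-^T)$ on $H$, no further work is needed here.

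For the second part I would show that if the inclusion is an equality at every point of every hyperplane in $\cH$, then $\cD$ is constant, i.e. spanned by the columns of one fixed matrix. The converse is immediate: a strict inclusion means $\cD$ takes different values on $R_+$ and $R_-$. For the forward direction the steps are as follows.
\begin{enumerate}
\item $\R^d\setminus\cH$ splits into finitely many open regions. On each region $\cD$ is constant by Observation \ref{obsJ}, since the activation pattern, and hence $\mathbb{J}(s)$, is locked there.
\item Build the adjacency graph of these regions, joining two regions when they share a codimension-one face lying in a single hyperplane. This graph is connected: a generic segment joining points of two regions avoids all pairwise intersections of hyperplanes, which have codimension two. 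So it crosses $\cH$ only through such faces.
\item At each crossing, equality of the two images means $\cD$ takes the same value on both adjacent regions.
\item By connectivity, $\cD$ is the same subspace on all of $\R^d\setminus\cH$.
\item On $\cH$ itself, $\cD_x=\im(\mathbb{J}_x(s)_-^T)$ equals the value on an adjacent region. So $\cD$ is globally constant.
\end{enumerate}

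The main obstacle is the crossing step when the hyperplanes of $\cH$ are not honest global hyperplanes. A ReLU unit in a deeper layer switches along a bent, piecewise-linear hypersurface, so "the two sides $R_\pm$ of $H$" must be understood locally, near a point lying on exactly one such piece. Here I would use the paper's simplifying convention of working away from intersections of two or more hyperplanes. I would then apply Lemma \ref{key-lem} locally, in a neighbourhood $U$ meeting only one piece, which is all its proof uses. The connectivity argument via generic segments then goes through unchanged.
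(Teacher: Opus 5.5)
Your treatment of the inclusion is exactly the paper's: it is just Lemma~\ref{key-lem} applied at each $x\in H$. Note that $\im(\mathbb{J}(s)^T_\pm)$ is constant only on a small neighbourhood of $x$, not on all of $R_\pm$, as you say yourself at the end. For the equivalence with non-constancy the paper gives no argument. It only asserts it, and then uses the nontrivial direction (smooth and non-constant $\Rightarrow$ strict at some $x$) to choose $H$ and $x$ in the proof of Theorem~\ref{not_smoot}. Your generic-segment argument is therefore an addition rather than a different route, and it is aimed at exactly that direction. This is also where smoothness is genuinely used: only the inclusion lets you read ``not strict'' as ``equal''. The easy direction needs no hypothesis.

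The argument is sound at points lying on a single switching piece, where $\im(\mathbb{J}_x(s)^T_\pm)$ really are the values of $\cD$ on the two adjacent cells. Step 5 is where the paper's ``generic points'' convention stops being harmless. Where several pieces meet, $\ReLU'(0)=1$ switches on every unit with vanishing pre-activation, and that pattern need not be the pattern of any adjacent cell.

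The origin shows this. Since (\ref{score-def}) has no biases, every hyperplane of $\cH$ passes through $0$, and there the convention gives the all-on Jacobian $\mathbb{J}_0(s)=W_\ell\cdots W_1$. No cell realizes the all-on pattern as soon as $0$ lies in the interior of the convex hull of the rows of $W_1$, which is an open set of weights.

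Concretely, take one hidden layer in $\R^2$ with five equally spaced directions, and two score rows chosen generically in $\ker W_1^T$. Then $\cD$, generated by score gradients as in App.~\ref{smooth-app}, equals $\R^2$ off the origin and $0$ at the origin. It is smooth (spanned by the fields $x_i\partial_{x_k}$) and non-constant, yet the inclusion is an equality at every point lying on a single line. The statement survives only because the inclusion is strict at $x=0$ itself, which your step 5 never examines.

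As written, your argument proves the weaker equivalence ``strict at some point of a single piece $\iff$ $\cD$ non-constant off the points of $\cH$ lying on two or more pieces''. The global version the paper states needs a separate treatment of those intersection points.
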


\begin{theorem}\label{not_smoot}
Let $\cD$ be the DIM distribution and assume $\cD$ is non constant.
The set of points $w\in \mathbb{R}^W$ in the parameter space for
which the distribution $\cD$ is smooth
is contained in a closed set of measure zero.
\end{theorem}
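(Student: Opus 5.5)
By Lemma \ref{key-lem}, smoothness of $\cD$ forces the inclusion $\im(\mathbb{J}_x(s)^T_+)\supseteq \im(\mathbb{J}_x(s)^T_-)$ at every $x$ on every hyperplane $H\subset\cH$. The plan is to show that this inclusion is a polynomial condition on the weights $w$ and that the polynomial is not identically zero. Its zero locus is then a proper real algebraic subset of the parameter space, which is closed and of Lebesgue measure zero.

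\emph{Step 1: rank condition.} Fix a hyperplane $H$, cut out by the $i$-th unit of $\ReLU_k$, and fix an activation pattern of the other units near a point of $H$ (not on other hyperplanes). On $R_-\cup H$ the Jacobian $\mathbb{J}(s)_-$ is a product of the weight matrices interleaved with fixed $0/1$ diagonal matrices, so its entries are polynomials in $w$. By Corollary \ref{cor_1}, $\mathbb{J}(s)_+$ is obtained by zeroing the $i$-th row of $W_{k+1}$, so it is polynomial as well. The inclusion $\im(\mathbb{J}^T_-)\subseteq\im(\mathbb{J}^T_+)$ is equivalent to
\[
\rank\begin{pmatrix}\mathbb{J}(s)_+\\ \mathbb{J}(s)_-\end{pmatrix}=\rank\,\mathbb{J}(s)_+ .
\]
Moreover $\mathbb{J}_-=\mathbb{J}_+ + u\,v^T$, where $v^T$ is the row of $W_{k}\cdots$ feeding unit $i$ and $u$ is the downstream column through row $i$ of $W_{k+1}$. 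The rank jump is therefore at most one, and the inclusion fails exactly when $v\notin\im(\mathbb{J}_+^T)$. This failure is witnessed by a nonvanishing $(r+1)\times(r+1)$ minor of the stacked matrix, where $r$ is the generic rank of $\mathbb{J}_+$. Smoothness therefore implies that the polynomial $P(w)$ given by the sum of squares of all such minors vanishes.

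\emph{Step 2: $P\not\equiv 0$.} This is the step I expect to be the main obstacle. The hypothesis that $\cD$ is non constant (Observation \ref{obs_7}) supplies a weight vector $w_0$ at which the inclusion is strict somewhere, so $P(w_0)\neq 0$. I would need to check that such a point exists on a hyperplane for an activation pattern that is realized. I would also need to check that the set of $w$ for which this hyperplane and pattern are actually realized in $\R^d$ has nonempty interior. The cleanest approach is to argue that the realization conditions are open (strict linear inequalities in $x$ with coefficients polynomial in $w$), so that near $w_0$ the same $H$ and pattern persist. Since a nonzero polynomial cannot vanish on an open set, $P$ is not identically zero.

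\emph{Step 3: conclusion.} Combining Steps 1 and 2, the set of smooth parameters lies in $Z(P)$, the zero set of a nonzero polynomial. Such a set is closed, and it has measure zero by the standard induction on variables or Fubini argument. Finitely many choices of $(k,i)$ and activation pattern are involved, and a finite union of such zero sets is still closed and null, which gives the theorem.
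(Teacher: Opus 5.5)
There is a genuine gap in Step~2: you use the non-constancy hypothesis in the wrong direction. Observation~\ref{obs_7} is formulated for a \emph{smooth} $\cD$. Non-constancy there means that at some $x\in H$ the inclusion of Lemma~\ref{key-lem} holds \emph{properly}, $\im(\mathbb{J}_x(s)_-^T)\subsetneq\im(\mathbb{J}_x(s)_+^T)$. At such a $w_0$ your stacked matrix has row space $\im(\mathbb{J}(s)_+^T)$, so its rank is $\rank\mathbb{J}(s)_+(w_0)\le r$. All its $(r+1)\times(r+1)$ minors therefore vanish, and $P(w_0)=0$, not $P(w_0)\neq 0$.

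To get $P\not\equiv 0$ you would need a weight vector at which the inclusion \emph{fails}, and the hypothesis provides none. Your polynomial $P$ never uses non-constancy at all. So your argument, if it worked, would show that smoothness is non-generic for every architecture and activation pattern, and this breaks down in general. For instance, for a pattern where $\mathbb{J}(s)_+$ generically has full rank $d$, the inclusion is automatic and $P\equiv 0$. A minor point in Step~1: since $\mathbb{J}(s)_-=\mathbb{J}(s)_++uv^T$, the inclusion fails iff $u\neq0$ \emph{and} $v\notin\im(\mathbb{J}(s)_+^T)$.

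The missing idea, which is what the paper uses, is to convert the \emph{proper} inclusion into a rank drop of $\mathbb{J}(s)_-$ below its generic rank.
\begin{itemize}
\item At a smooth, non-constant $w$ you have $\rank\mathbb{J}(s)_-(w)<\rank\mathbb{J}(s)_+(w)$. Set $k=\rank\mathbb{J}(s)_-(w)+1$; then all $k\times k$ minors of $\mathbb{J}(s)_-$ vanish at $w$.
\item By Corollary~\ref{cor_1}, $\mathbb{J}(s)_+$ is $\mathbb{J}(s)_-$ evaluated at weights with some entries set to zero. Hence every minor of $\mathbb{J}(s)_+$ is a specialization of the corresponding minor of $\mathbb{J}(s)_-$.
\item If the $k\times k$ minors of $\mathbb{J}(s)_-$ vanished identically, so would those of $\mathbb{J}(s)_+$. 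That contradicts $\rank\mathbb{J}(s)_+(w)\ge k$.
\item So $w$ lies in a proper algebraic, hence closed and null, subset of parameter space. You can make this uniform in $w$ using $\{\rank\mathbb{J}(s)_-<r_-\}$, with $r_-$ the generic rank of $\mathbb{J}(s)_-$, since $r_+\le r_-$ by the same specialization argument.
\end{itemize}
Your bookkeeping (fixing activation patterns, openness of their realization, and finite unions) can then be reused to finish.
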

\begin{proof}
We will use the rank constraints on the DIM distribution $\cD$ to construct
polynomial equations in the parameter space, defining a closed set of measure
zero where $\cD$ can be smooth. Let $H$ be an hyperplane in $\cH$ and fix
$x$ in $H$. By Lemma
\ref{key-lem} we have that:
\beq\label{rk-eq}
\im(\mathbb{J}_x(s)_{+}^T)\supseteq \im(\mathbb{J}_x(s)_{-}^T)
\eeq
Let us choose $H$ and $x$
so that the inclusion is strict (see Obs. \ref{obs_7}).
Assume $\rank(\mathbb{J}_x(s)_{-})$ to be $k-1$; then all $k\times k$ minors
of $\mathbb{J}_x(s)_{-}$ are zero. If they do not vanish identically they
define the required closed and measure zero set in the parameter space.
If they do vanish identically,
from Corollary \ref{cor_1}, it follows that all the $k\times k$ minors
of $\mathbb{J}_x(s)_{+}$ vanish identically as well.
But this leads to a contradiction since we are assuming:
\beq
\im(\mathbb{J}_x(s)_{+}^T)\supset \im(\mathbb{J}_x(s)_{-}^T)
\eeq
hence $\mathbb{J}_x(s)_{+}$ has strictly higher rank.
\end{proof}

\section{Distributions}\label{app-distr}

In this appendix, we compare our definition of distribution (see Def. \ref{distrib-def}) on a manifold $M$ with another
notion of distribution, which is encountered in the literature and
is not equivalent to the one in Def.\ref{distrib-def}. In order to distinguish between the two definitions,
we refer to this new notion as {\sl module distribution}. This is the
approach taken in \citep{hermann, lavau}.

\begin{definition}\label{mod-distr}
{\rm We define \textit{module distribution} 
$\cF$ as a $C^\infty(M)$ submodule of $\fX(M)$ the
$C^\infty(M)$ module of smooth vector fields of $M$.}
\end{definition}

Notice that we make no hypothesis on $\cF$ being a finitely
generated or a free $C^\infty(M)$ submodule of $\fX(M)$.

The next observation clarifies the relation between the
two  different approaches to the theory of distributions. 

\begin{observation}
If $\cD$ is a smooth distribution, then we can determine a module distribution
by looking at the smooth global sections of $\cD$,
$$
\Gamma(\cD):=\{f:M \mapsto \coprod_{x\in M} \cD_x\, |\, f(x)\in \cD_x\}
$$
Similarly, if $\cF$ is a module distribution (they are by definition smooth),
we can obtain a distribution by the assignment:
$$
x \mapsto \Span\{X_x \, | \, X \in \cF\}
$$
{These two maps are not inverse of each other.}
There are many module distributions which correspond to the same
distribution and some module distributions that are not $\Gamma(\cD)$
for any distribution $\cD$.
\end{observation}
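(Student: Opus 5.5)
The statement to prove is the last one displayed: the observation in App.~\ref{app-distr} that the passage $\cD\mapsto\Gamma(\cD)$ from smooth distributions to module distributions, and the passage $\cF\mapsto(x\mapsto\Span\{X_x\,|\,X\in\cF\})$ back, are not mutually inverse. I plan to prove it by explicit counterexamples on $M=\R$, each failing in one direction. I will first check that both maps are well defined. If $\cD$ is smooth, $\Gamma(\cD)$ is closed under addition and under multiplication by $C^\infty(M)$ functions, because each fibre $\cD_x$ is a vector space; hence it is a $C^\infty(M)$-submodule of $\fX(M)$. In the other direction, the pointwise span of a module distribution is a distribution in the sense of Def.~\ref{distrib-def}, and it is smooth because it is locally spanned by elements of $\cF$.

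\emph{Many modules give the same distribution.} Let $\cF_1=\fX(\R)$ and let $\cF_2=x\,\fX(\R)=\{xf\partial_x\,|\,f\in C^\infty(\R)\}$. The first module spans $T_x\R$ at every point. The second spans $T_x\R$ at every $x\neq 0$ and the zero subspace at $x=0$, so the two modules produce different distributions, and this pair is not the example I want. I will instead compare $\cF_2$ with $\cF_3=x^2\fX(\R)$. Both have pointwise span $\Delta$, where $\Delta_0=0$ and $\Delta_x=T_x\R$ for $x\neq0$. They are different modules: if $x\partial_x=x^2 f\partial_x$ for some smooth $f$, then $1=xf$ for all $x\neq0$, which is impossible since $xf(x)\to0$ as $x\to0$. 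The same argument with $x^k$ shows there are infinitely many distinct modules over the same $\Delta$.

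\emph{Not every module is a $\Gamma(\cD)$.} Let $\Delta$ be the distribution just described. I will show that $\Gamma(\Delta)$ consists of all smooth $X=g\partial_x$ with $g(0)=0$. By Hadamard's lemma such a $g$ can be written $g=xh$ with $h$ smooth, so $\Gamma(\Delta)=\cF_2$. Then $\cF_3\subsetneq\Gamma(\Delta)$, while $\cF_3$ and $\Gamma(\Delta)$ have the same pointwise span. The remaining step is to show that $\cF_3\neq\Gamma(\cD)$ for every smooth distribution $\cD$. Suppose $\cF_3=\Gamma(\cD)$. The fibres of $\cD$ are then recovered by applying the map $\Gamma(\cD)\mapsto$ pointwise span; this uses the fact that for a smooth distribution every vector in $\cD_x$ is the value at $x$ of a global section, which follows from local spanning fields multiplied by a bump function. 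So $\cD=\Delta$, and therefore $\Gamma(\cD)=\cF_2\neq\cF_3$, a contradiction. In particular, the composite $\cF\mapsto\Span\mapsto\Gamma$ does not return $\cF_3$, and so the two maps are not inverse to each other.

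The main obstacle is exactly that bump-function/partition-of-unity step: proving that a smooth distribution is the pointwise span of its global smooth sections. This is what shows $\Span\circ\Gamma=\mathrm{id}$, while $\Gamma\circ\Span\neq\mathrm{id}$. Everything else is a one-variable computation with Hadamard's lemma.
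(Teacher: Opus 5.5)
Your argument is correct, and it is essentially the paper's own justification, namely Example~\ref{not-fg} with the modules $C^\infty(\R)\langle x^k\partial_x\rangle$ on $\R$. Your treatment is more careful than the paper's. By restricting to $k\geq 1$ and using Hadamard's lemma to get $\Gamma(\Delta)=x\,\fX(\R)$, you avoid two slips in the paper: it says $k=0$ gives the same distribution as $k\geq 1$, and that the global sections recover the $k=0$ module. In fact they recover the $k=1$ module, so only the modules with $k\geq 2$ fail to be of the form $\Gamma(\cD)$.
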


Motivated by the previous observation, we give the
following definition.

\begin{definition}
{\rm Let $\cF$ be a module distribution. We define $T\cF$ the distribution
obtained as:
$$
x \mapsto T\cF_x:=\Span\{X_x \, | \, X \in \cF\}
$$}
\end{definition}

\begin{example}\label{not-fg}
{\rm Consider the module distributions on $M=\R$ as $k \geq 0$ varies: 
$$
\cF_k=C^\infty(M)\langle x^k\partial_x \rangle \subset \fX(M) 
$$
{where $C^\infty(M)\langle x^k\partial_x \rangle=\{\sum_k f_k(x) x^k\partial_x \,|\, f_k(x) \in C^\infty(M)\}$.}

Clearly $\cF_k \not\cong \cF_l$ for $k \neq l$. However one
sees right away that
$$
\cD: x \mapsto (\cF_k)_x:=\Span\{X_x \,| \, X \in \cF_k\}
$$
give the same distribution for all $k$.

Vice-versa, by looking at the global sections of $\cD$ we retrieve
only $\cF_0$, and one can easily be convinced that the
module distributions $\cF_k$ are not retrieved as the global sections
of any distribution (this is because the information of the $x^k$ is lost
in the evaluation at $x$).}
\end{example}

We now face the question of integrability in the
two different approaches. Before we start the discussion it is useful
to keep in mind that the theorem we are about to state, Hermann's theorem, is the strongest integrability result, 
but regards only submodules of $\fX(M)$, i.e. module distributions. For this reason it is
somehow less applicable in concrete questions.

\begin{theorem}[\citet{hermann}]
Let $\Delta$ be a distribution such that $\Delta=T\cF$ for
$\cF$ a module distribution. If
\begin{enumerate}
\item $\cF$ is finitely generated as $C^\infty(M)$ submodule of $\fX(M)$
i.e.
$$
\cF=C^\infty(M)\langle X^1,\dots, X^r\rangle \subset \fX(M)
$$
\item $\cF$ is a Lie subalgebra of  $\fX(M)$, i.e.
$[X,Y] \in \cF$
\end{enumerate}
then $\Delta$ is integrable.
\end{theorem}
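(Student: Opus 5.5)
The statement to prove is Hermann's theorem: if $\Delta=T\cF$ with $\cF=C^\infty(M)\langle X^1,\dots,X^r\rangle$ finitely generated and closed under brackets, then $\Delta$ is integrable. The plan is to reduce to the Stefan-Sussmann Theorem. It suffices to show that for every $X\in\cF$ the flow $\phi^X_t$ satisfies $(\phi^X_t)_*(\Delta_x)=\Delta_{\phi^X_t(x)}$ wherever defined. Every element of $\Gamma(\Delta)$ that we need for generating $\Delta$ is already in $\cF$, so invariance of $\Delta$ under flows of elements of $\cF$ is exactly condition (\ref{key-eq}) for the generating family $\cF$. Since each $X\in\cF$ is a $C^\infty(M)$-combination of the $X^i$, and $\Delta_x=\Span\{X^1_x,\dots,X^r_x\}$, it suffices to show that $(\phi^X_{-t})_* X^i_{\phi_t(x)}\in\Delta_x$ for all $i$ and all small $t$.

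\textbf{Key step: a linear ODE.} Fix $X\in\cF$ and $x\in M$, and consider the curves $V_i(t):=(\phi^X_{-t})_*\big(X^i_{\phi^X_t(x)}\big)\in T_xM$. Differentiating gives
$$
\frac{d}{dt}V_i(t)=(\phi^X_{-t})_*\big([X,X^i]_{\phi^X_t(x)}\big).
$$
By the involutivity hypothesis (2), $[X,X^i]\in\cF$. By finite generation (1), we may write $[X,X^i]=\sum_j c_{ij}X^j$ with $c_{ij}\in C^\infty(M)$. This step uses hypothesis (1) essentially: without finite generation the coefficients would not be global smooth functions. It follows that
$$
\frac{d}{dt}V_i(t)=\sum_j c_{ij}(\phi^X_t(x))\,V_j(t),
$$
a linear system of ODEs in the fixed vector space $(T_xM)^r$ with smooth time-dependent coefficients. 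Let $\Phi(t)$ be its fundamental matrix, with $\Phi(0)=\mathbb{I}$. Then $V_i(t)=\sum_j\Phi_{ij}(t)V_j(0)=\sum_j\Phi_{ij}(t)X^j_x\in\Delta_x$, which gives $(\phi^X_{-t})_*\Delta_{\phi_t(x)}\subseteq\Delta_x$. Applying the same argument with $-X$ in place of $X$ gives the reverse inclusion.

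\textbf{Conclusion and expected obstacle.} With flow invariance established, the Stefan-Sussmann Theorem yields local integrability of $\Delta$. The main obstacle I expect is justifying the derivative formula for $V_i(t)$ at a general time $t$, rather than only at $t=0$. My plan for this is to use the group property $\phi_{-t-h}=\phi_{-t}\circ\phi_{-h}$ to reduce the derivative at $t$ to the standard identity $\frac{d}{dh}\big|_{0}(\phi_{-h})_*Y_{\phi_h(y)}=[X,Y]_y$, applied at $y=\phi_t(x)$. A secondary technical point is incompleteness of flows; this is handled by working on the open domain of $\phi^X$ and restricting to a small time interval.
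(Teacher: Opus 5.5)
The paper gives no proof of this statement. It is quoted from \citet{hermann}, followed only by the remark that it ``hides subtle points,'' so there is no in-paper argument to compare yours with.

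Your argument is correct. It is the standard derivation of Hermann's theorem from the Stefan--Sussmann theorem, essentially the one in \citep{lavau}:
\begin{itemize}
\item $\Delta=T\cF$ gives $\cF\subseteq\Gamma(\Delta)$.
\item Finite generation gives $\Delta_x=\Span\{X^1_x,\dots,X^r_x\}$.
\item Involutivity together with global generation makes the transported generators $V_i(t)=(\phi^X_{-t})_*X^i_{\phi^X_t(x)}$ solve a linear system $\dot V=C(t)V$ with scalar coefficients $c_{ij}(\phi^X_t(x))$.
\item Uniqueness then gives $V(t)=\Phi(t)V(0)$, hence $V_i(t)\in\Delta_x$.
\end{itemize}

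The one point to state more carefully is which form of Stefan--Sussmann you invoke. You need the version that requires invariance only under the flows of \emph{some} family spanning $\Delta$ (Sussmann's condition), not under the flows of all of $\Gamma(\Delta)$. The paper's formulation does allow this (``generated by a family of smooth vector fields, and invariant with respect to their flow''). So the generators $X^1,\dots,X^r$ alone suffice, and your sentence about ``every element of $\Gamma(\Delta)$ that we need'' can simply be replaced by the observation $\cF\subseteq\Gamma(\Delta)$.

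The passage to $-X$ is harmless but unnecessary. Your ODE argument holds for all $t$ in the two-sided interval on which $\phi^X_t(x)$ is defined. Applying it at the point $\phi^X_t(x)$ with time $-t$ gives $(\phi^X_t)_*\Delta_x\subseteq\Delta_{\phi^X_t(x)}$ directly.

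Beyond the bare citation, your route makes explicit where each hypothesis enters. It also shows that local finite generation would suffice: cover a compact piece of the trajectory by finitely many neighbourhoods on which $\cF$ has generators, and chain the same ODE argument across them.
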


This result is apparently simple, but it is very
profound and hides subtle points. 

One natural question is whether a given $\Delta$ is finitely generated. In other words
if we can find global vector fields $X^1, \dots, X^n$ such that
$\Delta_x=\Span\{X^1_x, \dots, X^n_x\}$.
The answer is yes, however the
proof is very technical and surprisingly complicated \citep{dra}.

Another question regards module distributions.
Since any distribution $\Delta$ is finitely generated (see point (1)),
we can ask if $\Gamma(\Delta)
\subset \fX(M)$ is finitely generated. It is true that in general
a module distribution is not a finitely generated module, but perhaps
those coming as the global sections of a distribution
are finitely generated? 
The answer is surprisingly no. Let us look again at the example in $M=\R$.
We define the distribution $\Delta$ as follows:
$$
\Delta_x:=\Span\{X_x\}, \qquad
X=\begin{cases} 0 & \text{for $x\leq 0$} \\ \partial_x & \text{for $x > 0$}
\end{cases}
$$
$\Delta$ is smooth, since $\Delta_x$ is equivalently spanned at each
$x$ by
$$
X'=\begin{cases}  0 & \text{for $x\leq 0$} \\
e^{-\frac{1}{x^2}}\partial_x & \text{for $x > 0$}
\end{cases}
$$
One sees that:
$$
\Gamma(\Delta)\cong \{f:M=\R \lra \R\,|\, f|_{(-\infty,0]}=0\}
$$
and this is not finitely generated. It contains the functions 
{$$f(x)=\begin{cases}
		x^k\hbox{ for }x>0;\\
		0\hbox{ otherwise}
	\end{cases}$$}
for all $k${$>0$}, which are not obtainable as combination of other $C^\infty$ functions.
This fact is not entirely obvious.

Hence Hermann condition (1) does not apply, though condition (2) is
obvious (we are in dimension 1, everything is involutive). {However, }
one can see that this distribution is 
integrable.

\end{document}